\newdimen\figrasterwd
\theoremstyle{plain}% Theorem-like structures provided by amsthm.sty
\newtheorem{theorem}{Theorem}[section]
\newtheorem{lemma}[theorem]{Lemma}
\newtheorem{proposition}[theorem]{Proposition}
\theoremstyle{definition}
\newtheorem{definition}[theorem]{Definition}
\newtheorem{assumption}{Assumption}
\theoremstyle{remark}
\newtheorem{remark}{Remark}
\pgfplotsset{compat=newest}
\newcommand{\rev}[1]{{#1}}
\begin{document}
%%=============================================================%%
%% GivenName	-> \fnm{Joergen W.}
%% Particle	-> \spfx{van der} -> surname prefix
%% FamilyName	-> \sur{Ploeg}
%% Suffix	-> \sfx{IV}
%% \author*[1,2]{\fnm{Joergen W.} \spfx{van der} \sur{Ploeg} 
%%  \sfx{IV}}\email{iauthor@gmail.com}
%%=============================================================%%

\articletype{ARTICLE}% Specify the article type or omit as appropriate

\title{Projection-based curve pattern search for black-box optimization over smooth convex sets}

\author{
\name{Xiaoxi Jia\textsuperscript{a}, Matteo Lapucci\textsuperscript{b}\thanks{CONTACT Matteo Lapucci. Email: matteo.lapucci@unifi.it}, Pierluigi Mansueto\textsuperscript{b}}
\affil{\textsuperscript{a}Independent researcher\\ \textsuperscript{b}Dipartimento di Ingegneria dell'Informazione, Università di Firenze, Via di S.\ Marta, 3, 50139, Firenze, Italy}
}

\renewcommand{\thefigure}{\roman{figure}}
\renewcommand{\thetable}{\roman{table}}
\renewcommand{\thepage}{\roman{page}}
\renewcommand{\thefigure}{\arabic{figure}}
\renewcommand{\thetable}{\arabic{table}}
\renewcommand{\thepage}{\arabic{page}}
\setcounter{figure}{0}
\setcounter{table}{0}
\setcounter{page}{0}

\maketitle

\begin{abstract}
In this paper, we deal with the problem  of optimizing a black-box smooth function over a full-dimensional smooth convex set. We study sets of feasible curves that allow to properly characterize stationarity of a solution and possibly carry out sound backtracking curvilinear searches. We then propose a general pattern search algorithmic framework that exploits curves of this type to carry out poll steps and for which we prove properties of asymptotic convergence to stationary points. We particularly point out that the proposed framework covers the case where search curves are arcs induced by the Euclidean projection of coordinate directions. The method is finally proved to arguably be superior, on smooth problems, than other recent projection-based algorithms \rev{and is competitive with state-of-the-art methods} from the literature on constrained black-box optimization.
\end{abstract}

\begin{keywords}
Pattern search methods; Projection; Smooth constraints; Derivative-free line search
\end{keywords}

\begin{amscode}
	90C56, 90C30, 90C26 
\end{amscode}

\section{Introduction}
\label{sec::intro}

In this manuscript we consider the problem of minimizing a continuously differentiable function $f:C \to\mathbb{R}$ within a compact convex feasible set $C\subset \mathbb R^n$, i.e.,
\begin{equation}
\label{Eq:P} 
\begin{aligned}
    \min \ & f(x)\\  \text{s.t. }& x\in C.
\end{aligned}
\end{equation}
More specifically, we assume that $C$ is an $n$-dimensional compact object with a smooth boundary: in other words, there exists some continuously differentiable function $g:\mathbb{R}^n\to\mathbb{R}$ such that $C=\{x\in\mathbb{R}^n\mid g(x)\le 0\}$, its frontier is given by $\partial C=\{x\in\mathbb{R}^n\mid g(x)= 0\}$ and there exists $x\in C$ such that $g(x)<0$, i.e., $x\in\operatorname{int}C$. While function $g$ might be unknown, we assume to have access to the Euclidean projection operator onto this set $P_C:\mathbb{R}^n\to C$ and that the cost of computing projection is sustainable.

On the other hand, while $f$ is smooth, we assume not to have direct access to its derivatives: we are thus in a setting of black-box optimization. We assume that $f$ is bounded below on $C$ by some value $f^*$ and that the value of $f$ cannot be computed outside the feasible set. 

A vast literature has dealt with black-box optimization problems; we refer the reader to \cite{larson2019derivative} for a recent survey on derivative-free optimization. In this paper, we put particular focus on the class of pattern search methods \cite{hooke1961direct,lewis1999pattern,lewis2000pattern,audet2002analysis,custodio2007using,ortho09} and specifically on pattern search algorithms based on line searches. These methods, whose study has roots in a couple of pioneering works by Grippo et al.\  \cite{de1984stopping,grippo1988global}, includes effective algorithms to tackle constrained \cite{Lucidi2002Derivative,lucidi2002objective} and unconstrained \cite{lucidi2002global} black-box optimization problems, even in absence of smoothness assumptions on the objective function \cite{fasano2014linesearch}.

For the particular case of constrained problems with smooth black-box objective, which is the focus of the present work, the classical approach based on line-searches along coordinate directions (or other suitable, predefined sets of directions) has naturally be extended to deal with bound constraints \cite{Lucidi2002Derivative}; with more complex constraints, the set of search directions shall take into account the local structure of the feasible set: while this can be reasonably done with general linear constraints, when constraints are nonlinear much more caution is required \cite{lucidi2002objective}. 

A different path to handling constraints within pattern search frameworks is based on penalty approaches, either in a sequential \cite{liuzzi2010sequential,lewis2002globally} or exact fashion \cite{di2015derivative}:  the complexity of handling constraints is moved to the objective function, which can then be optimized with methods for unconstrained problems. While effective in various settings, this latter class approaches suffer from the issue of possibly needing objective function evaluations even outside the feasible set, where the black-box might be not well defined. Moreover, this kind of methods often suffer from the high sensitivity to the choice of the penalty parameters, that heavily impact the performance of the algorithm. 

Recently, a strategy was proposed in \cite{galvan2021parameter} that overcomes these limitations under the assumption that the projection operation onto the feasible set is available: problem \eqref{Eq:P} is equivalently reformulated as an unconstrained nonsmooth problem where the objective function at any point $x$ is equal to the sum of the original objective computed at the projection of $x$ onto $C$ and the distance of $x$ from the set $C$ itself; clearly, the value corresponds to the value of the original objective when $x\in C$. The equivalent unconstrained problem can then be tackled with the CS-DFN method from \cite{fasano2014linesearch} for nonsmooth problems. 

While the projection based approach from \cite{galvan2021parameter} has some connections to the fundamental ideas of projected gradient methods for first-order optimization \cite[Sec.\ 2.3]{bertsekas1999nonlinear}, it cannot be actually seen as a zeroth-order adaptation of the latter. In fact, while points obtained by steps along given directions are projected onto the feasible sets in both frameworks, the projected gradient method produces a sequence of iterates all belonging to the feasible set, whereas the approach from \cite{galvan2021parameter} possibly moves along unfeasible solutions; when this happens, it may come with some practical drawbacks: 
first, the points to be polled will be (almost) all unfeasible, thus requiring a high number of projections; this might result in a significant cost if projection onto $C$ is not particularly cheap. Moreover, not only the presence of a penalty part within the auxiliary objective might end up slowing down the optimization process, but its nonsmoothness also makes convergence dependent on the employment of random search directions. 

The main contribution of this paper consists in a more consistent adaptation of the projected gradient method to the black-box case, under the assumption that $C$ is a full-dimensional smooth convex subset of the Euclidean space. Such an extension, which overcomes the aforementioned drawbacks of the approach from \cite{galvan2021parameter}, requires to study the behavior of searches along suitable projection arcs induced by a predefined set of search directions.
To the best of our knowledge, the idea of a curve pattern search that polls the objective at points obtained, for example, by the projection of coordinate steps is novel in the literature.

\rev{\label{rev:intro} It may be important to underline here that we have a different goal compared to other works, such as  the recent contribution \cite{Custódio03052024}. In the latter paper, authors suggest to emulate actual projected-gradient steps in the black-box scenario: this can be done employing a numerically approximated gradient smartly obtained using the information provided by sampled points. We rather aim to insert the projection mechanism directly in the poll step: such a technique might induce benefits like, e.g., guaranteed feasibility of sampled points. 
In fact, a similar strategy as the one we propose in this manuscript is mentioned (by the name DDS-proj) as a heuristic and tested in the experimental section of \cite{Custódio03052024}, either as a standalone method and as a way to improve the polling step for the simplex-gradient type approach; however, in contrast to this work, the theoretical properties of DDS-proj are not formally discussed there. 

More in general, for our method there will not even be the need for a dense of search directions to achieve convergence solely via polling in constrained problems. This property does not hold true, even when $f$ is smooth, for other state-of-the-art pattern-search methods in the literature like, e.g., \cite{ortho09} and \cite{Custódio03052024} itself. 

All in all, despite a significantly simplified structure, the algorithm we propose to use in our experiments possesses convergence guarantees and shows competitive performance w.r.t.\ state-of-the-art alternatives.

}

The rest of the paper is organized as follows: in Section \ref{sec::feasible_set}, we review known results concerning the characterization of the feasible sets considered in this manuscript and the corresponding stationary solutions; then, we turn in Section \ref{sec:curves} to the definition of the concept of feasible search path and the interconnection between stationarity of solutions and suitable sets of search curves. We present the novel algorithmic framework in Section \ref{sec:alg}, providing the characterization of a gradient-based version in Section \ref{sec:alg_1o} and then presenting and theoretically analyzing the derivative-free method in Section \ref{sec:alg_df}. In Section \ref{sec::num_res} we present the results of computational experiments assessing efficiency and effectiveness of the proposed method - implemented exploiting coordinate directions and projection operation - compared to another pattern search approach \rev{and a selection of state-of-the-art methods} from the literature. We finally give concluding remarks in Section \ref{sec:concl}.

\section{Characterization of the feasible set}
\label{sec::feasible_set}

As stated in the introduction, in this work we deal with feasible sets $C$ that satisfy the following assumptions:
\begin{enumerate}
    \item[(a1)] $C$ is convex;
    \item[(a2)] $C$ is compact;
    \item[(a3)] there exists $g:\mathbb{R}^n\to \mathbb{R}$ continuously differentiable such that $C=\{x\in\mathbb{R}^n\mid g(x)\le 0\}$ and $g(x)<0$ for some $x\in C$.
\end{enumerate}

As with any convex feasible set, we can characterize points $x$ in the feasible region by the corresponding \textit{tangent cone} $T_C(x)$, defined as \cite[Def.\ 3.3.1]{bertsekas1999nonlinear}:
$$T_C(x) = \{0\}\cup\left\{y\in\mathbb{R}^n\,\middle\vert\, \exists\{x^k\}\subseteq C\setminus\{x\} \text{ s.t. }x^k\to x\text{ and }\frac{x^k-x}{\|x^k-x\|}\to \frac{y}{\|y\|}\right\}.$$
The tangent cone is always a convex cone if $C$ is convex. Moreover, $T_C(x)$ always contains all feasible directions at $x$, so it is easy to see that the stationarity property
\begin{equation}
    \label{eq:stat_tangent_cone}
    \nabla f(x)^Td\ge 0 \quad \forall\; d\in T_C(x)
\end{equation}
is a necessary condition of optimality for problem \eqref{Eq:P}.
This condition in general is not immediately checkable, since we are asking for the property to hold for infinitely many directions.

However, the condition becomes numerically easy to assess if we are able to do projections onto the tangent cone and we have direct access to the gradient of the objective function.
\begin{proposition}
    A point $\bar{x}\in C$ is a stationary point in the sense of \eqref{eq:stat_tangent_cone} for problem \eqref{Eq:P} if and only if we have
    $$P_{T_C(\bar{x})}(-\nabla f(\bar{x})) = 0.$$ 
\end{proposition}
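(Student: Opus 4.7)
The plan is to exploit the fact that $T_C(\bar x)$ is a closed convex cone (which is granted here because $C$ is closed and convex by assumptions (a1)--(a3)), and then invoke the variational characterization of the Euclidean projection onto a closed convex set, namely: $p = P_{T_C(\bar x)}(v)$ if and only if $p\in T_C(\bar x)$ and $\langle v-p,\, y-p\rangle \le 0$ for all $y\in T_C(\bar x)$. Substituting $v = -\nabla f(\bar x)$ and specializing to the candidate projection $p=0\in T_C(\bar x)$, this inequality collapses to $\langle -\nabla f(\bar x),\, y\rangle \le 0$ for every $y\in T_C(\bar x)$, i.e., $\nabla f(\bar x)^T y \ge 0$ for all $y\in T_C(\bar x)$. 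Both directions of the equivalence then fall out of this single characterization, so the proof essentially amounts to writing each implication out carefully.

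For the ``only if'' direction, I would start from stationarity \eqref{eq:stat_tangent_cone} and observe that it is exactly the condition $\langle -\nabla f(\bar x), y - 0\rangle \le 0$ for all $y\in T_C(\bar x)$. Since $0\in T_C(\bar x)$ by definition, the variational inequality characterizing projection is satisfied with $p=0$, and uniqueness of the projection onto a closed convex set yields $P_{T_C(\bar x)}(-\nabla f(\bar x)) = 0$. For the ``if'' direction, I would assume $P_{T_C(\bar x)}(-\nabla f(\bar x)) = 0$ and apply the variational inequality directly: this gives $\langle -\nabla f(\bar x),\, y\rangle \le 0$ for all $y\in T_C(\bar x)$, which is precisely the stationarity condition~\eqref{eq:stat_tangent_cone}.

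There is no real obstacle here: the statement is a textbook consequence of the projection theorem once the geometric objects are identified correctly. The only subtlety worth flagging — and which I would mention in a single line — is that the tangent cone $T_C(\bar x)$ is closed and convex, so that the projection operator $P_{T_C(\bar x)}$ is indeed well defined and single-valued; this follows from $C$ being closed and convex (assumptions (a1)--(a2)), together with the standard fact that in the convex setting the tangent cone coincides with the closure of the cone of feasible directions. Alternatively, one may phrase the whole argument in terms of the polar/normal cone identity $T_C(\bar x)^\circ = N_C(\bar x)$ and Moreau's decomposition, but the direct use of the variational inequality is the shortest route.
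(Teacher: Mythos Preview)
Your proposal is correct and follows essentially the same route as the paper: both directions rest on the variational inequality characterization of the projection onto the closed convex set $T_C(\bar x)$, with the ``if'' part literally identical. The only cosmetic difference is in the ``only if'' part, where the paper writes out the variational inequality at the point $0$ to obtain $\nabla f(\bar x)^T\bar d \le -\|\bar d\|^2$ and then combines this with stationarity to force $\bar d=0$, whereas you shortcut this by invoking uniqueness of the projection directly; the underlying argument is the same.
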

\begin{proof}
    Assume $\bar{x}$ is stationary and let $\bar{d} = P_{T_C(\bar{x})}(-\nabla f(\bar{x}))$. Since $0\in T_C(\bar{x})$, by the properties of the projection onto the convex set $T_C(\bar{x})$ we have
    $$(-\nabla f(\bar{x})-\bar{d})^T(0-\bar{d})\le 0,$$
    i.e.,
    $$\nabla f(\bar{x})^T\bar{d}\le -\|\bar{d}\|^2.$$
    Since $\bar{d}\in T_C(\bar{x})$ by definition and $\bar{x}$ is stationary, we necessarily have $\|\bar{d}\|=0$.

    On the other hand, assume $\bar{d} = P_{T_C(\bar{x})}(-\nabla f(\bar{x}))=0$. Let $d$ be any direction in $T_C(\bar{x})$. By the properties of projection we can write this time
    $$(-\nabla f(\bar{x})-\bar{d})^T(d-\bar{d})\le 0,$$
    i.e., 
    $$0\ge (-\nabla f(\bar{x})-0)^T(d-0) = -\nabla f(\bar{x})^Td,$$ which finally implies $\nabla f(\bar{x})^Td\ge 0$. Since $d$ is arbitrary in $T_C(\bar{x})$, we get the thesis.
\end{proof}

Under assumption (a3), we can go even further.
Since Slater's \rev{C}onstraint \rev{Q}ualification \rev{(CQ)} is clearly verified, for every point $x\in C$ the tangent cone coincides with the cone of first order variations \rev{\label{rev2:V}$V_C(x)$} \cite[Sec.\ 3.3.6]{bertsekas1999nonlinear}:
$$T_C(x) = V_C(x) = \begin{cases}
    \{y\in\mathbb{R}^n\mid \nabla g(x)^Ty\le 0\}&\text{if } g(x)=0 \text{ (i.e., } x\in\partial C \text{)},\\
    \mathbb{R}^n&\text{otherwise},
\end{cases}$$
where $\nabla g(x)\neq 0$ for all $x\in\partial C$ (Slater's CQ implies \rev{\label{rev2::CQ}Mangasarian–Fromovitz} CQ, see again \cite[Sec.\ 3.3.6]{bertsekas1999nonlinear}). In other words, the tangent cone is the full space for points in the interior of the feasible set, whereas it is a halfspace that varies continuously along the frontier. 
As noted in \cite[Sec.\ 3]{lucidi2002objective}, when $T_C(x)$ is polyhedral\rev{,} as in our case, we can characterize a stationary point of problem \eqref{Eq:P} by a finite number of directions. In order to do so, we first need to recall the concept of positive span of a set of directions $D=\{d_1,\ldots,d_m\}$: we denote by $\operatorname{cone}(D)=\{v\in\mathbb{R}^n\mid \exists\,\beta\in\mathbb{R}_+^m \text{ s.t. } v=\beta_1d_1+\ldots+\beta_md_m\}$ the cone positively spanned by the directions in $D$. We are now able to state the following proposition \cite[Prop.\ 2]{lucidi2002objective}.
\begin{proposition}
\label{prop:tseng}
    Let $C$ be a convex set, $\bar{x}\in C$ and assume $T_C(\bar{x})$ is polyhedral. Then, $\bar{x}$ is a stationary point in the sense of \eqref{eq:stat_tangent_cone} for problem \eqref{Eq:P} if and only if
    $$\nabla f(\bar x)^Td\ge 0 \quad \forall\; d\in D,$$
    where $\operatorname{cone}(D)=T_C(\bar{x})$.
\end{proposition}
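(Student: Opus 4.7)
The proposition essentially says that when the tangent cone is polyhedral (and hence finitely generated), the infinite-directions stationarity condition \eqref{eq:stat_tangent_cone} collapses to a condition over a finite generating set. The plan is to prove both implications by plugging the positive-span characterization of $\operatorname{cone}(D) = T_C(\bar{x})$ directly into the definition of stationarity.

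For the "only if" direction, I assume $\bar{x}$ is stationary in the sense of \eqref{eq:stat_tangent_cone}, so $\nabla f(\bar{x})^T d \ge 0$ for every $d \in T_C(\bar{x})$. Every generator $d_i \in D$ lies in $\operatorname{cone}(D) = T_C(\bar{x})$ (taking $\beta_i = 1$ and all other coefficients zero in the positive-span representation), hence the condition $\nabla f(\bar{x})^T d \ge 0$ holds in particular for all $d \in D$.

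For the converse direction, I assume $\nabla f(\bar{x})^T d \ge 0$ for all $d \in D = \{d_1,\ldots,d_m\}$ and pick an arbitrary $y \in T_C(\bar{x})$. By hypothesis on $D$, there exist scalars $\beta_1,\ldots,\beta_m \ge 0$ such that $y = \sum_{i=1}^m \beta_i d_i$, so
$$\nabla f(\bar{x})^T y \;=\; \sum_{i=1}^m \beta_i\, \nabla f(\bar{x})^T d_i \;\ge\; 0,$$
since every summand is nonnegative. As $y \in T_C(\bar{x})$ was arbitrary, this recovers \eqref{eq:stat_tangent_cone}.

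There is no real obstacle here: the result is a direct consequence of the linearity of $d \mapsto \nabla f(\bar{x})^T d$ combined with the definition of the positive span. The only thing that needs to be stated carefully is the positive-coefficient representation of elements of $\operatorname{cone}(D)$, which was already recalled just before the proposition.
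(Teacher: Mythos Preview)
Your proof is correct. Note, however, that the paper does not actually supply its own proof of this proposition: it simply cites \cite[Prop.\ 2]{lucidi2002objective} and moves on. Your argument is precisely the standard one --- linearity of the map $d\mapsto\nabla f(\bar{x})^T d$ plus the definition of the positive span --- and is what any reader would reconstruct; there is nothing to compare it against in the paper itself.
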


The above proposition will be crucial to tackle problems of the form \eqref{Eq:P} without having access to $\nabla f$.

\section{Searching along curves}
\label{sec:curves}
Since we will have to deal with problem \eqref{Eq:P} without the opportunity of following the gradient direction, we are interested in devising predefined patterns to explore for reducing the objective value from a given point. In the unconstrained case, any set $D$ of directions such that $\operatorname{cone}(D)=\mathbb{R}^n$ can be used as a basis for derivative-free line searches \cite{lucidi2002global}; the same goes for the fortunate case of bound constraints \cite{Lucidi2002Derivative}. In presence of more complex constraints the situation gets harder, and the set of search directions to follow has to be carefully identified for each encountered solution \cite{lucidi2002objective}. 
The goal of this work is to identify a predefined search scheme that does not require to study and build a set of search directions dependent on the particular current point before polling new solutions.

To this aim, we will consider curvilinear search paths, formalized according to the following definition.
\begin{definition}
    We say that $\gamma:\mathbb{R}_+\to C$ is a \textit{feasible search path} at $x\in C$ if $\gamma$ is a continuous curve such that $\gamma(0)=x$, $\gamma(t)\in C$ for all $t\in\mathbb{R}_+$ and $\gamma$ is differentiable in $t=0$. 
\end{definition}
By definition, every feasible search path $\gamma$ is associated with an \textit{initial velocity} $v=\gamma'(0)$. 
In the following, we will be interested in sets $\Gamma(x)$ of feasible descent paths, possibly dependent on the particular point $x\in C$, such that:
\begin{itemize}
    \item if $x$ is not stationary, a decrease of the objective function is attained by $\gamma(t)$ for at least one $\gamma \in \Gamma(x)$ and for values of $t$ sufficiently small;
    \item if there exists $\epsilon>0$ such that $f(\gamma (t))\ge f(\gamma(0))=f(x)$ for all $t\in(0,\epsilon)$ and $\gamma\in \Gamma(x)$, then $x$ is necessarily a stationary point.
\end{itemize}

The following two propositions ensure us that a set of curves $\Gamma(x)$ does the job if the velocities of the curves positively span the tangent cone at a point, i.e., if $\operatorname{cone}(D)=T_C(x)$ where $D=\{v=\gamma'(0)\mid\gamma\in \Gamma(x)\}$. \rev{\label{rev2::D}Here and in the following, we will denote by $\mathcal{D}_{f\circ \gamma}$ the derivative with respect to $t$ of the composite function $f(\gamma(t))$.}

\begin{proposition}\label{Prop:DecObj}
    Let $C$ be a convex set and let $\bar{x}\in C$ be a nonstationary point in the sense of \eqref{eq:stat_tangent_cone} (i.e., $\exists\,d\in T_C(\bar{x}):\nabla f(\bar{x})^Td<0$) such that $T_C(\bar{x})$ is polyhedral. Let $\Gamma(\bar{x})$ be a set of feasible search paths such that $\operatorname{cone}(D)= T_C(\bar{x})$ with $D=\{v=\gamma'(0)\mid\gamma\in \Gamma(\bar{x})\}$.
    Then, for any $\sigma>0$, there exists $\gamma\in\Gamma(\bar{x})$ and $\bar{t}>0$ such that, for all $t\in(0,\bar{t}]$, we have $f(\gamma(t))-f(\gamma(0))<-\sigma t^2<0$. 
\end{proposition}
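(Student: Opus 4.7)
The plan is to reduce this to a one-dimensional Taylor expansion argument once the right curve has been selected. The first step is to exploit Proposition~\ref{prop:tseng}: by hypothesis $T_C(\bar x)$ is polyhedral, and $\operatorname{cone}(D)=T_C(\bar x)$ with $D=\{\gamma'(0)\mid \gamma\in\Gamma(\bar x)\}$. Non-stationarity of $\bar x$ therefore negates the conclusion of that proposition, so there must exist an initial velocity $v\in D$ (and a corresponding curve $\gamma\in\Gamma(\bar x)$ with $\gamma'(0)=v$) such that $\nabla f(\bar x)^T v<0$. I would call this scalar $\eta:=\nabla f(\bar x)^T v<0$.

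The second step is a standard chain-rule argument. Since $f$ is continuously differentiable at $\bar x=\gamma(0)$ and $\gamma$ is differentiable at $t=0$ by the definition of feasible search path, the composition $f\circ\gamma$ is differentiable at $0$ with
\[
\mathcal{D}_{f\circ \gamma}(0)=\nabla f(\gamma(0))^{T}\gamma'(0)=\eta<0.
\]
Equivalently,
\[
f(\gamma(t))-f(\bar x)=\eta\, t+o(t)\qquad\text{as }t\to 0^+.
\]

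The third step is to absorb the quadratic correction $-\sigma t^2$ into the dominant negative linear term. Writing
\[
\frac{f(\gamma(t))-f(\bar x)+\sigma t^{2}}{t}=\eta+\sigma t+\frac{o(t)}{t},
\]
the right-hand side tends to $\eta<0$ as $t\downarrow 0$, so there exists $\bar t>0$ such that the quotient is strictly negative on $(0,\bar t]$. Multiplying back by $t>0$ yields $f(\gamma(t))-f(\bar x)<-\sigma t^2$ for every $t\in(0,\bar t]$, which is exactly the claimed bound.

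There is no real obstacle in the argument: the only delicate point worth underlining is that $f\circ\gamma$ need not be twice differentiable and $\gamma$ is differentiable only at $0$, so one cannot rely on a quadratic Taylor expansion of the composition. The bound $-\sigma t^2$ with $\sigma>0$ arbitrary is nevertheless obtained for free because the linear term $\eta t$ is strictly negative and therefore dominates $\sigma t^2$ for sufficiently small $t$. The careful selection of $v$ via Proposition~\ref{prop:tseng} is what makes the whole plan work; without the positive-spanning hypothesis on $D$, one could not guarantee a descent velocity among the finitely many available ones.
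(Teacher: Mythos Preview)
Your proof is correct and follows essentially the same approach as the paper: both select the curve via Proposition~\ref{prop:tseng}, compute $\mathcal{D}_{f\circ\gamma}(0)=\nabla f(\bar x)^T v<0$ by the chain rule, and then exploit that the linear term dominates $\sigma t^2$ as $t\to 0^+$. The only cosmetic difference is that the paper phrases the last step as a contradiction (assuming a sequence $t_k\to 0^+$ along which the sufficient-decrease inequality fails and deriving $\mathcal{D}_{f\circ\gamma}(0)\ge 0$), whereas you argue directly from the first-order expansion; the mathematical content is identical.
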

\begin{proof}
    By Proposition \ref{prop:tseng}, there exists ${v}\in D$ such that $\nabla f(\bar{x})^T{v}<0$. Let $\gamma\in\Gamma(\bar{x})$ such that $v=\gamma'(0)$.

    Now, let us assume by contradiction that 
    $$f(\gamma(t_k))\ge f(\bar{x})-\sigma t_k^2$$
    for a sequence $\{t_k\}\subseteq\mathbb{R}_+$ such that $t_k\to 0^+$. Rearranging and dividing both sides by $t_k$, we get
    $$\frac{f(\gamma(t_k))-f(\gamma(0))}{t_k}\ge -\sigma t_k.$$
    Taking the limits for $k\to\infty$ \rev{\label{rev1::proof}and recalling the definition of the sequence $\{t_k\}$, we get $$\frac{f(\gamma(t_k))-f(\gamma(0))}{t_k}\to \mathcal{D}_{f\circ\gamma}(0), \qquad -\sigma t_k \to 0,$$ thus,}
    $$\mathcal{D}_{f\circ\gamma}(0)\ge 0.$$
    We can then note that, by the chain rule,
    $$\mathcal{D}_{f\circ\gamma}(0) = \nabla f(\gamma(0))^T\gamma'(0) = \nabla f(\bar{x})^Tv.$$
    We therefore get that, 
    $$\nabla f(\bar{x})^Tv\ge 0,$$
    which finally gives us a contradiction.
\end{proof}

\begin{proposition}
\label{prop:suff_cond_stat}
    Let $C$ be a convex set and let $\bar{x}\in C$  such that $T_C(\bar{x})$ is polyhedral. Let $\Gamma(\bar{x})$ be a set of feasible search paths such that $\text{cone}(D)= T_C(\bar{x})$ with $D=\{v=\gamma'(0)\mid\gamma\in \Gamma(\bar{x})\}$. Further assume that there exists $\epsilon>0$ such that $f(\gamma (t))\ge f(\gamma(0))=f(\bar{x})$ for all $t\in(0,\epsilon)$ and $\gamma\in \Gamma(\bar{x})$. Then, $\bar{x}$ is a stationary point for problem \eqref{Eq:P} in the sense of \eqref{eq:stat_tangent_cone}.
\end{proposition}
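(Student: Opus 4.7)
The plan is to observe that this proposition is essentially the contrapositive counterpart of Proposition \ref{Prop:DecObj} and thus admits an argument that runs through the same limit computation in reverse. My preferred route is the direct one: extract first-order directional information from the assumed nondecrease along each curve, and then close with Proposition \ref{prop:tseng}.

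First, I would fix an arbitrary $\gamma\in\Gamma(\bar{x})$ and set $v=\gamma'(0)\in D$. The hypothesis gives $f(\gamma(t))-f(\gamma(0))\ge 0$ for every $t\in(0,\epsilon)$, so dividing by $t>0$ we get
$$\frac{f(\gamma(t))-f(\gamma(0))}{t}\ge 0 \quad \text{for all } t\in(0,\epsilon).$$
Letting $t\to 0^+$ and invoking the differentiability of $\gamma$ at $0$ together with the $C^1$ regularity of $f$, the chain rule (exactly as in the proof of Proposition \ref{Prop:DecObj}) yields
$$\mathcal{D}_{f\circ\gamma}(0) = \nabla f(\bar{x})^T\gamma'(0) = \nabla f(\bar{x})^T v \ge 0.$$

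Since $\gamma$ was arbitrary in $\Gamma(\bar{x})$, this inequality holds for every $v\in D$. At this point I would invoke Proposition \ref{prop:tseng}: because $T_C(\bar{x})$ is polyhedral and $\operatorname{cone}(D)=T_C(\bar{x})$, the condition $\nabla f(\bar{x})^T d\ge 0$ for all $d\in D$ is equivalent to stationarity of $\bar{x}$ in the sense of \eqref{eq:stat_tangent_cone}, and the proof is complete.

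There is no real obstacle here; the only subtle point is that one must justify passing the inequality to the limit as a directional derivative of the \emph{composite} map $f\circ\gamma$ rather than of $f$ itself, which is precisely why the definition of feasible search path requires differentiability of $\gamma$ at $t=0$. Once this is in place, the statement reduces cleanly to Proposition \ref{prop:tseng}. As an alternative, one could phrase the whole argument as the contrapositive of Proposition \ref{Prop:DecObj}: nonstationarity would produce some $\gamma\in\Gamma(\bar{x})$ with $f(\gamma(t))<f(\bar{x})-\sigma t^2$ for all sufficiently small $t>0$, directly contradicting $f(\gamma(t))\ge f(\bar{x})$ on $(0,\epsilon)$.
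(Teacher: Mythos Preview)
Your proof is correct and follows essentially the same route as the paper's own argument: divide the nondecrease inequality by $t$, pass to the limit $t\to 0^+$ via the chain rule to obtain $\nabla f(\bar{x})^T v\ge 0$ for every $v\in D$, and conclude by Proposition~\ref{prop:tseng}. The paper's proof is virtually identical to your direct argument, and your remark on the contrapositive via Proposition~\ref{Prop:DecObj} is a valid alternative that the paper does not spell out.
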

\begin{proof}
    % Assume by contradiction that there exists $d\in T_C(\bar{x})$ such that 
    % \begin{equation}
    %     \label{eq:contradict1}
    %     \nabla f(\bar{x})^Td<0.
    % \end{equation}
    By the assumptions, we know that for all $\gamma\in\Gamma(\bar{x})$ and for $t>0$ sufficiently small we have 
    $f(\gamma(t))-f(\gamma(0))\ge 0$ and thus 
    $$\frac{f(\gamma(t))-f(\gamma(0))}{t}\ge 0.$$
    Taking the limits for $t\to0^+$, we get
    \begin{equation}
        \label{eq:p1}
        \mathcal{D}_{f\circ\gamma}(0) = \nabla f(\gamma(0))^T\gamma'(0) = \nabla f(\bar{x})^Tv\ge 0 
    \end{equation}
    for all $v\in D$. Since by assumption $\operatorname{cone}(D) =T_C(\bar{x})$, we get the thesis from Proposition \ref{prop:tseng}.
    % $d\in T_C(\bar{x}) = \operatorname{cone}(D)$, there exist $\beta_1,\ldots,\beta_{|D|}\ge 0$ such that we can write
    % $$d=\sum_{v_i\in D}\beta_iv_i.$$
    % Multiplying by $\nabla f(\bar{x})$ and recalling that \eqref{eq:p1} holds for all $v\in D$, we get
    % $\nabla f(\bar{x})^Td=\sum_{v_i\in D}\beta_i\nabla f(\bar{x})^Tv_i\ge 0$, which finally contradicts \eqref{eq:contradict1}.
\end{proof}

Proposition \ref{Prop:DecObj} actually provides us not only with a guaranteed decrease result at a nonstationary point, but also ensures us that we can attain a sufficient decrease.

However, the results in Propositions \ref{Prop:DecObj} and \ref{prop:suff_cond_stat} are clearly based on choices of $\Gamma(x)$ heavily dependent on the particular point $x$.

In the following, we are going to present a scheme, only based on the projection operator, implicitly defining curves that satisfy these suitable conditions at all points of a set $C$ satisfying assumptions (a1)-(a3).
Specifically, we are going to consider, at any point $x\in C$, the paths defined by curves of the form 
\begin{equation}
    \label{eq:curves_form}
    \gamma_y(t)=P_C(x+ty),\qquad y\in\mathcal{Y},
\end{equation}
where $\mathcal{Y}$ is a suitable set of directions (not necessarily feasible nor in the tangent cone at $x$).
We now start providing a preliminary result that ensures that curves of the form \eqref{eq:curves_form} actually define feasible search paths at $x$ with identifiable initial velocities.
\begin{lemma}
\label{prop:proj_makes_curves}
    Let $C$ be a convex set and $\bar{x}\in C$. Let $y\in\mathbb{R}^n$ be any direction. The curve $\gamma_y(t)=P_C(\bar{x}+ty)$ is a feasible search paths at $\bar{x}$, with an initial velocity $\gamma_y'(0)=P_{T_C(\bar{x})}(y)$. 
\end{lemma}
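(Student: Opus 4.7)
The plan is to verify each requirement of a feasible search path in turn, with the bulk of the effort dedicated to identifying the initial velocity. The ``easy'' properties follow from standard facts about the Euclidean projection onto a closed convex set: the identity $\gamma_y(0)=P_C(\bar{x})=\bar{x}$ holds because $\bar{x}\in C$; each $\gamma_y(t)$ lies in $C$ by definition of $P_C$; and continuity of $\gamma_y$ is inherited from the non-expansiveness (1-Lipschitz continuity) of the projection. The crux of the argument is thus to prove that $\gamma_y$ is right-differentiable at $t=0$ and that the corresponding derivative equals $d := P_{T_C(\bar{x})}(y)$.

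To this end, I would introduce the difference quotient $w(t) := (\gamma_y(t)-\bar{x})/t$ for $t>0$ and aim at $\lim_{t\to 0^+} w(t) = d$. The variational inequality characterizing the projection,
\[
\langle \bar{x}+ty - \gamma_y(t),\, c - \gamma_y(t) \rangle \le 0 \quad \forall\, c\in C,
\]
specialized at $c=\bar{x}$, yields $\|w(t)\|^2 \le \langle y,w(t)\rangle$, which in particular gives the uniform bound $\|w(t)\|\le\|y\|$. Consequently, along any sequence $t_k\to 0^+$ the family $\{w(t_k)\}$ admits accumulation points, and it will suffice to show that every such accumulation point $w^\ast$ coincides with $d$.

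The argument then proceeds in three steps. First, since $\bar{x}+t_k w(t_k)=\gamma_y(t_k)\in C$ with $\gamma_y(t_k)\to\bar{x}$, the definition of the tangent cone yields $w^\ast\in T_C(\bar{x})$ (trivial if $w^\ast=0$, else via the normalized quotient). Second, rewriting the variational inequality as $\langle y-w(t),\,c-\bar{x}\rangle\le t\,\langle y-w(t),\,w(t)\rangle$ and letting $t_k\to 0^+$ produces $\langle y-w^\ast,\,c-\bar{x}\rangle\le 0$ for every $c\in C$, i.e., $y-w^\ast\in N_C(\bar{x})=T_C(\bar{x})^\circ$. Third, passing to the limit in $\|w(t_k)\|^2 \le \langle y,w(t_k)\rangle$ yields $\langle y-w^\ast, w^\ast\rangle\ge 0$, whereas polarity gives the reverse inequality $\langle y-w^\ast, w^\ast\rangle\le 0$; hence $\langle w^\ast, y-w^\ast\rangle=0$. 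Invoking Moreau's conic decomposition theorem for the closed convex cone $T_C(\bar{x})$ and its polar, these three properties uniquely identify $w^\ast$ with $P_{T_C(\bar{x})}(y)=d$. Since $w$ is bounded and admits $d$ as its unique accumulation point, the full limit $w(t)\to d$ follows. The main obstacle, as illustrated, is exploiting the variational inequality in complementary ways---at $c=\bar{x}$ and at arbitrary $c\in C$---to simultaneously secure tangency, normality and orthogonality of $w^\ast$, so that Moreau's decomposition can be applied.
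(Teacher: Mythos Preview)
Your argument is correct. The easy parts match the paper verbatim, but for the key differentiability claim the paper takes a one-line shortcut: it simply invokes a known result (citing Shapiro, 2016) stating that $P_C$ is directionally differentiable at any $\bar{x}\in C$ with directional derivative $\mathcal{D}_{P_C}(\bar{x};y)=P_{T_C(\bar{x})}(y)$, and then observes that this directional derivative is exactly $\gamma_y'(0)$. Your route is instead a self-contained proof of that very fact: you extract boundedness and the three ingredients (tangency of $w^\ast$, normality of $y-w^\ast$, and orthogonality) directly from the projection variational inequality and then close with Moreau's conic decomposition. What the paper's approach buys is brevity and a clean pointer to the literature; what your approach buys is transparency---no external black box is needed, and one sees precisely which structural features of $C$ (closed, convex) are used and where. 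Both are valid; yours is arguably more informative for a reader unfamiliar with the cited result.
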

\begin{proof}
    The continuity of $\gamma_y$ straightforwardly follows from the continuity of the projection onto a convex set. Moreover, by the definition of projection, we trivially have $\gamma_y(t)\in C$ for all $t$ and $\gamma_y(0) = \bar{x}$.
    As for the differentiability in $t=0$, we know (see, e.g., \cite[eq.\ (2)]{Shapiro2016}) that $P_C$ is directionally differentiable at every feasible point $x\in C$, with $\mathcal{D}_{P_C}(x;d) = P_{T_C(x)}(d)$ for all $d\in\mathbb{R}^n$; we therefore have
    \begin{align*}
        P_{T_C(\bar{x})}(y)=\mathcal{D}_{P_C}(\bar{x};y)&= \lim_{t\to 0^+}\frac{P_C(\bar{x}+ty)-P_C(\bar{x})}{t}\\ &= \lim_{t\to 0^+}\frac{\gamma_y(t)-\gamma_y(0)}{t}=\gamma_y'(0).
    \end{align*}
\end{proof}

At this point, what we would like to identify is a (constant) set of directions $\mathcal{Y}\subseteq\mathbb{R}^n$ such that we are guaranteed that, at any point $x\in C$, we have $$\operatorname{cone}(\{P_{T_C(x)}(y)\mid y\in \mathcal{Y}\})=T_C(x).$$
As we are going to prove shortly, the set of coordinate directions $B=\{e_1,\ldots,e_n, \allowbreak -e_1,\ldots,-e_n\} = \{b_1,\ldots,b_{2n}\}$, with $e_i$ being the $i$-th element of the canonical basis, enjoys this property when $C$ satisfies assumptions (a1)-(a3). 
\begin{proposition}
\label{prop:veloc_canon}
    Let $C$ be a set satisfying assumptions (a1)-(a3) and let $x\in C$. Let  $D=\{v_i=P_{T_C(x)}(b_i)\mid b_i\in B\}$. Then $\operatorname{cone}(D)=T_C(x)$.
\end{proposition}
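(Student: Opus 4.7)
The plan is to split the argument into two cases depending on the position of $x$. If $x\in\operatorname{int} C$, by the characterization of $T_C(x)$ reviewed in Section~\ref{sec::feasible_set} we have $T_C(x)=\mathbb{R}^n$; then each projection $P_{T_C(x)}(b_i)$ coincides with $b_i$, so $D=B$, and the conclusion $\operatorname{cone}(D)=\mathbb{R}^n$ is immediate, since $B=\{\pm e_1,\ldots,\pm e_n\}$ positively spans $\mathbb{R}^n$.

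The substantial part of the proof concerns the boundary case $x\in\partial C$, where $T_C(x)$ becomes the half-space $H=\{y\in\mathbb{R}^n\mid a^Ty\le 0\}$ with $a:=\nabla g(x)\neq 0$. The inclusion $\operatorname{cone}(D)\subseteq H$ is immediate, since every $v_i$ belongs to $H$ and $H$ is a convex cone. For the reverse inclusion, my plan is to resort to Farkas's lemma (equivalently, the bipolar theorem for finitely generated cones): it suffices to show that every $w$ satisfying $w^Tv_j\le 0$ for all $j=1,\ldots,2n$ lies in $H^\circ=\{\lambda a\mid \lambda\ge 0\}$.

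The core computation is to make the constraints $w^T P_H(b_j)\le 0$ explicit. Using the closed-form projection $P_H(u)=u$ when $a^Tu\le 0$ and $P_H(u)=u-\bigl((a^Tu)/\|a\|^2\bigr)a$ otherwise, a short case analysis according to the sign of $a_i$ shows that, for every coordinate $i$, the two constraints arising from $\pm e_i$ reduce to
\[
a_iw_i\ge 0 \quad\text{and}\quad a_iw_i\|a\|^2\le a_i^2\,(a^Tw).
\]
The crucial step is then to sum the second family of inequalities over $i=1,\ldots,n$: both sides collapse to $\|a\|^2(a^Tw)$, so equality must hold in each individual inequality. This forces $w_i=\bigl((a^Tw)/\|a\|^2\bigr)\,a_i$ for every $i$, that is $w=\lambda a$; combined with the sign information $a^Tw=\sum_i a_iw_i\ge 0$, we obtain $\lambda\ge 0$ and hence $w\in H^\circ$, as needed.

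I expect the main technical obstacle to be the bookkeeping in the case analysis based on the sign of $a_i$; the key simplification is that all three cases ($a_i$ positive, negative or zero) collapse into the same uniform inequality $a_iw_i\|a\|^2\le a_i^2(a^Tw)$, which is what enables the summation trick and makes the argument clean. Conceptually, the reason one can recover a full half-space using only $2n$ search velocities is that for every coordinate $i$, at least one of $\pm e_i$ is already tangent to $C$, while any direction escaping $H$ is \emph{re-absorbed} into $H$ by the projection along $-a$, yielding exactly the ``corrective'' components needed to positively span the whole half-space.
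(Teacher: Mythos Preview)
Your argument is correct. The interior case is identical to the paper's, and in the boundary case your polar/Farkas approach goes through cleanly: the case analysis on the sign of $a_i$ indeed produces the uniform pair of inequalities $a_iw_i\ge 0$ and $a_iw_i\|a\|^2\le a_i^2(a^Tw)$ in all three subcases, and since each summand in the second family is bounded above by the corresponding summand on the right while the sums coincide, equality is forced termwise, giving $w=\lambda a$ with $\lambda\ge 0$. The only small remark is that the passage from $(\operatorname{cone}(D))^\circ\subseteq H^\circ$ to $H\subseteq\operatorname{cone}(D)$ uses closedness of $\operatorname{cone}(D)$, which holds because it is finitely generated; you may want to say this explicitly.

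The paper takes a genuinely different, primal/constructive route in the boundary case. Given $d\in T_C(x)$, it first picks some $b_i\in B$ strictly interior to the half-space (i.e., $\nabla g(x)^Tb_i<0$, so $v_i=b_i$) and subtracts the right multiple of $b_i$ to push $d$ onto the bounding hyperplane $H_C(x)=\{y\mid\nabla g(x)^Ty=0\}$; it then shows that any such hyperplane vector $p$ lies in $\operatorname{cone}(D)$ by translating $p$ along $\nabla g(x)$ until it becomes a nonnegative combination of the ``outward'' coordinate directions $B_{\text{out}}=\{b\in B\mid\nabla g(x)^Tb\ge 0\}$, and finally using linearity of the projection onto the linear subspace $H_C(x)$ (which coincides with $P_{T_C(x)}$ on $B_{\text{out}}$) to pull this combination back. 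The paper's argument thus yields explicit conic coefficients for every $d$, whereas your dual approach is shorter and avoids the geometric decomposition entirely, at the price of being non-constructive. Both proofs exploit the same structural fact---that for each coordinate one of $\pm e_i$ already lies in $T_C(x)$ while the other is projected onto the hyperplane---but from opposite ends.
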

\begin{proof}
    There are two possible cases: $x\in\operatorname{int} C$ (\rev{$g(x)< 0$}) or $x\in\partial C$ ($g(x)=0$). We deal with the two cases separately.
    \begin{itemize}
        \item Let us assume $x\in\operatorname{int} C$. In this case $T_C(x)= \mathbb{R}^n$, so that $P_{T_C(x)}(b_i) = b_i$ for any $b_i\in B$; then we have $D=B$ and, therefore, $\operatorname{cone}(D) = \operatorname{cone}(B)$; since $B$ is trivially a positive spanning set of $\mathbb{R}^n$ and $T_C(x)=\mathbb{R}^n$, the result is proven for this case.
        \item Let us assume now that $x\in\partial C$.  Then we have $T_C(x) = \{d\in\mathbb{R}^n\mid \nabla g(x)^Td\le 0\}$ with $\nabla g(x)\neq 0$ \rev{(see Figure \ref{fig:proof} for some visual intuition)}.
        We first show that if $\bar{d}\notin T_C(x)$ then $\bar{d}\notin \operatorname{cone}(D)$, i.e., $\operatorname{cone}(D)\subseteq T_C(x)$. Since $\bar{d}\notin T_C(x)$, we have $\nabla g(x)^T\bar{d}> 0$. On the other hand, we know that $D=\{P_{T_C(x)}(b)\mid b\in B\}$, hence by the definition of $P_{T_C(x)}$ we have $\nabla g(x)^Tv\le 0$ for all $v\in D$. Let us assume by contradiction that $\beta_1,\ldots,\beta_{2n}\ge0$ exist such that $\bar{d} = \sum_{v_i\in D} \beta_iv_i$. We then have
        $$\nabla g(x)^T\bar{d} = \sum_{v_i\in D} \beta_i\nabla g(x)^Tv_i\le 0,$$
        which is absurd. Hence $\operatorname{cone}(D)\subseteq T_C(x)$. 

        We now show that $T_C(x)\subseteq \operatorname{cone}(D)$. Let $d$ be any direction in $T_C(x)$. We know that $\nabla g(x)^Td\le 0$. 
        Since $\nabla g(x)\neq 0$ and $\operatorname{cone}(B)=\mathbb{R}^n$, we are guaranteed that there exists $b_i\in B$ such that $\nabla g(x)^Tb_i< 0$ (and thus $b_i\in T_C(x)$ and $v_i=P_{T_C(x)}(b_i)=b_i$). 
        We then have, for all $t$,
        \begin{align*}
            \nabla g(x)^Td &= \nabla g(x)^T(d-tb_i+tb_i)\\
            & = \nabla g(x)^T(tb_i)+\nabla g(x)^T(d-tb_i).
        \end{align*}
        If we set $t^*=\frac{\nabla g(x)^Td}{\nabla g(x)^Tb_i}\ge0$,
        we can write
        $$d=p+t^*b_i = p+t^*v_i,$$
        with $p=d-t^*b_i$ $v_i\in D$ and $t^*\ge0$. Moreover, $p$ is such that $\nabla g(x)^Tp =\nabla g(x)^Td- \frac{\nabla g(x)^Td}{\nabla g(x)^Tb_i}\nabla g(x)^Tb_i=0$, hence $p\in H_{C}(x) = \{y\in\mathbb{R}^n\mid \nabla g(x)^Ty=0\}= \partial T_C(x)$.
        Then, $d\in \operatorname{cone}(D)$ if $p\in \operatorname{cone}(D)$.
        Now, let $\tilde{p}(t) = p+t\nabla g(x)$; since $\tilde{p}(t)^T\nabla g(x) = \nabla g(x)^Tp+t\|\nabla g(x)\|^2>0$ for all $t>0$, we have that $P_{T_C(x)}(\tilde{p}(t)) = P_{H_C(x)}(\tilde{p}(t))$.
        We shall now write
        $$\nabla g(x) = \sum_{i:\nabla_ig(x)>0}\nabla_ig(x)e_i+\sum_{i:\nabla_i g(x)<0}|\nabla_ig(x)|(-e_i).$$
        Moreover, there certainly exist $\lambda_1,\ldots,\lambda_n$ such that
        \begin{align*}
            p &= \sum_{i=1}^{n}\lambda_i e_i \\&= \sum_{i:\nabla_ig(x)>0}\lambda_ie_i+\sum_{i:\nabla_ig(x)<0}(-\lambda_i)(-e_i)+\sum_{\substack{i:\nabla_ig(x)=0\\\lambda_i\ge 0}}\lambda_ie_i+\sum_{\substack{i:\nabla_ig(x)=0\\\lambda_i< 0}}|\lambda_i|(-e_i).
        \end{align*}
        We therefore have
       \begin{align*}
           \tilde{p}(t)  = \sum_{i:\nabla_ig(x)>0}(\lambda_i+t\nabla_ig(x))e_i+\sum_{i:\nabla_ig(x)<0}(-\lambda_i+t|\nabla_ig(x)|)(-e_i)\\+\sum_{\substack{i:\nabla_ig(x)=0\\\lambda_i\ge 0}}\lambda_ie_i+\sum_{\substack{i:\nabla_ig(x)=0\\\lambda_i< 0}}|\lambda_i|(-e_i).
       \end{align*}
    We can now observe that, for $t$ sufficiently large, $\tilde{p}(t)$ is a positive linear combination of vectors in $B$. More specifically, it is a positive linear combination of vectors in $B$ such that $\nabla g(x)^Tb\ge 0$. In other words, letting $B_\text{out}=\{b\in B\mid \nabla g(x)^Tb\ge 0\}$, for $t$ sufficiently large there exist $\beta_1,\ldots,\beta_{|B_\text{out}|}\ge 0$ such that 
    $\tilde{p}(t) = \sum_{b_i\in B_\text{out}}\beta_ib_i$.
    We shall now note that the projection onto the linear subspace $P_{H_C(x)}$ is a linear operation; we therefore get
    \begin{align*}
        P_{T_C(x)}(\tilde{p}(t))&=P_{H_C(x)}(\tilde{p}(t)) = P_{H_C(x)}(p+t\nabla g(x)) \\&= P_{H_C(x)}(p)+P_{H_C(x)}(t\nabla g(x)) \\&= P_{H_C(x)}(p)=p,
    \end{align*}
    and, at the same time, 
    \begin{align*}
        P_{T_C(x)}(\tilde{p}(t))&=P_{H_C(x)}(\tilde{p}(t)) = P_{H_C(x)}\left(\sum_{b_i\in B_\text{out}}\beta_ib_i\right) \\&= \sum_{b_i\in B_\text{out}}\beta_i P_{H_C(x)}(b_i) \\&= \sum_{b_i\in B_\text{out}}\beta_i P_{T_C(x)}(b_i),    
    \end{align*}
    which concludes the proof since vectors $P_{T_C(x)}(b_i)$ are in $D$.
    \end{itemize}
\end{proof}

\begin{figure}[htbp]
		\centering
        \includegraphics[width=0.6\textwidth]{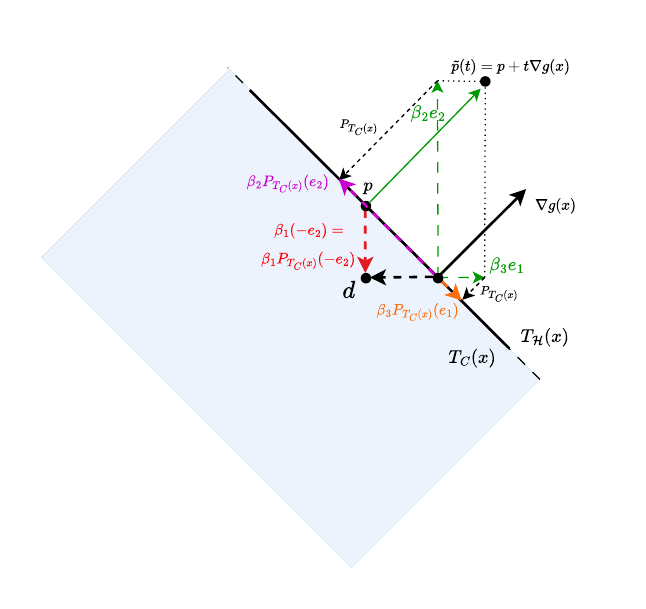}
		\caption{Visualization of the construction of case $x\in\partial C$ of the proof of Proposition \ref{prop:veloc_canon}.}
        \label{fig:proof}
\end{figure}

\section{The algorithmic scheme}
\label{sec:alg}
In this section we finally discuss an algorithmic approach exploiting the analysis made in Section \ref{sec:curves}. Here, we want to tackle problem \eqref{Eq:P} without accessing first-order information on the objective function.
However, it is useful underlining first a derivative-based counterpart of the derivative-free method. 

\subsection{First-order method}
\label{sec:alg_1o}
An algorithmic scheme could be devised with the following update rules:
\begin{align}
    \label{eq:gamma_gradient}
    &\gamma_k\text{ feasible search path s.t.\ }\gamma_k'(0) = P_{T_C(x^k)}(-\nabla f(x^k))=d_k,\\
    &\alpha_k = \max_{j=0,1,\ldots}\{\delta^j\Delta_0\mid f(\gamma_k(\delta^j\Delta_0))\le f(\gamma_k(0))-\sigma (\delta^j\Delta_0)^2\},\\
    \label{eq:deriv_alg}
    &x^{k+1} = \gamma_k(\alpha_k),
\end{align}
where $\delta\in(0,1)$ and $\Delta_0>0$. 
Algorithm \eqref{eq:gamma_gradient}-\eqref{eq:deriv_alg} enjoys global convergence guarantees under a further, reasonable continuity assumption on the feasible search paths $\gamma_k$, as stated in the hereafter.
\begin{assumption}
\label{ass:cont_fp}
    Let $\{x^k\}\subseteq C$ and let $\{\gamma_k\}$ be a sequence such that $\gamma_k$ is a feasible search path at $x^k$ such that $\gamma_k'(0)=P_{T_C(x^k)}(v_k)$ for some sequence of directions $\{v_k\}$. If there exists a subsequence $K\subseteq \{0,1,\ldots\}$ such that $\lim_{k\in K, k\to\infty}x^k = \bar{x}$ and $\lim_{k\in K, k\to\infty}v^k = \bar{v}$, then there exists $\bar \gamma:\mathbb{R}_+\to C$ such that $\bar{\gamma}(t) = \lim_{k\in K,k\to\infty}\gamma_k(t)$ for all $t\in\mathbb{R}_+$ and $\bar \gamma'(0) = P_{T_C(\bar{x})}(\bar{v})$.
\end{assumption}
\begin{proposition}
\label{prop:conv_fo}
    Assume $C$ satisfies assumptions (a1)-(a3). Let $\{x^k\}$ be the sequence generated according to \eqref{eq:gamma_gradient}-\eqref{eq:deriv_alg}, assuming that $x^0\in C$ and that the sequence of feasible search paths $\{\gamma_k\}$ satisfies Assumption \ref{ass:cont_fp}. Then the sequence $\{x^k\}$ admits accumulation points, each one being stationary for the problem.
\end{proposition}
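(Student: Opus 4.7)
My plan starts by observing that $\{x^k\}\subseteq C$ with $C$ compact, so accumulation points exist automatically. To ensure that the scheme actually yields an infinite sequence, I would first check that each inner backtracking loop terminates whenever $d_k\neq 0$: the projection identity $(-\nabla f(x^k)-d_k)^T(0-d_k)\le 0$ gives $\nabla f(x^k)^Td_k\le -\|d_k\|^2<0$, and since $\gamma_k$ is differentiable at $t=0$ with $\gamma_k'(0)=d_k$, we have $f(\gamma_k(t))-f(x^k)=-t\|d_k\|^2+o(t)$ as $t\to 0^+$, so the acceptance test $\le -\sigma t^2$ is eventually satisfied. If instead $d_k=0$ occurs at some iteration, then $x^k$ is already stationary by the characterization in Section \ref{sec::feasible_set} and the thesis is trivial. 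From the acceptance condition $f(x^{k+1})\le f(x^k)-\sigma\alpha_k^2$ together with $f\ge f^*$, a telescoping argument then yields both convergence of $\{f(x^k)\}$ and $\alpha_k\to 0$.

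Next, I would argue by contradiction to establish stationarity of any accumulation point: fix a subsequence $K$ with $x^k\to\bar x$ for $k\in K$ and suppose $\bar d:=P_{T_C(\bar x)}(-\nabla f(\bar x))\neq 0$. The continuity of $\nabla f$ together with Assumption \ref{ass:cont_fp} applied with $v_k=-\nabla f(x^k)$ then yield a feasible search path $\bar\gamma$ at $\bar x$ with $\bar\gamma'(0)=\bar d$ and pointwise convergence $\gamma_k(t)\to\bar\gamma(t)$ along $K$.

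The main technical step, and the place where I expect the argument to require most care, is passing to the limit: attacking it directly via the failing step $\eta_k=\alpha_k/\delta\to 0^+$ would entail a delicate double limit, since the $\gamma_k$ are only known to be differentiable at $t=0$. My plan is to sidestep this issue by fixing any $s$ of the form $\delta^j\Delta_0$: since $\alpha_k\to 0$, for all sufficiently large $k\in K$ we have $s>\alpha_k$, and by the definition of $\alpha_k$ as the largest element of $\{\delta^j\Delta_0\}$ satisfying Armijo, the test must fail at that $s$, yielding $f(\gamma_k(s))-f(x^k)>-\sigma s^2$. Letting $k\to\infty$ in $K$ via pointwise convergence of $\gamma_k(s)$ and continuity of $f$ produces $f(\bar\gamma(s))-f(\bar x)\ge -\sigma s^2$, hence
$$\frac{f(\bar\gamma(s))-f(\bar x)}{s}\ge -\sigma s.$$
Letting now $s\to 0^+$ along the sequence $\delta^j\Delta_0$ and invoking differentiability of $\bar\gamma$ at $0$, the left-hand side tends to $\nabla f(\bar x)^T\bar d$, so $\nabla f(\bar x)^T\bar d\ge 0$, which contradicts $\nabla f(\bar x)^T\bar d\le -\|\bar d\|^2<0$ and completes the argument.
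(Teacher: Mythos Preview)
Your proposal is correct and follows essentially the same route as the paper's proof: both fix an arbitrary step $s=\delta^q\Delta_0$, observe that for $k$ large in the subsequence this step must fail the acceptance test (since $\alpha_k\to 0$), pass to the limit in $k$ using Assumption~\ref{ass:cont_fp} to reach the inequality on $\bar\gamma$, and then let $s\to 0^+$ to obtain the contradictory directional derivative bound. Your preliminary well-definedness check (termination of the backtracking loop when $d_k\neq 0$, and the trivial case $d_k=0$) is a useful addition that the paper leaves implicit.
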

The proof of this result can be found, for the sake of completeness, in Appendix \ref{sec:app}.

As already outlined in Section \ref{sec:curves}, curves $\gamma_k$ defined as $\gamma_k(t) = P_C(x^k-t\nabla f(x^k))$ are always feasible search paths at iterates $x^k$. We shall note here that, in addition, this choice of the curves satisfies Assumption \ref{ass:cont_fp}. Indeed, if $x^k\to \bar{x}$, we have by the continuity of $P_C$ and $\nabla f$ that 
$$\lim_{k\to\infty}P_C(x^k-t\nabla f(x^k)) = P_C(\bar{x}-t\nabla f(\bar{x})) = \bar{\gamma}(t),$$
and $\bar \gamma'(0) = P_{T_C(\bar{x})}(-\nabla f(\bar{x}))$. We can then recover the convergence properties of the projected gradient method with the line search conducted along the projection arc.
% However, differentiability of $\gamma_k$ would not be always guaranteed. Fortunately, projection can be proved to be directionally differentiable everywhere if the feasible set has a t\textit{wice continuously differentiable smooth boundary} \textcolor{red}{[shapiro rimanda a un paper degli anni 70 per questo]} or, more generally, under \textit{second order regularity} assumptions \textcolor{red}{[cit shapiro2016]}. \textcolor{red}{[NON È EVIDENTISSIMO DAL PAPER DI SHAPIRO CHE LA SFERA VA BENE, ANCHE SE DOVREBBE. ]}

\subsection{Derivative-free method}
\label{sec:alg_df}
In this section we finally describe the main contribution of this paper: a derivative-free algorithm for solving problem \eqref{Eq:P} exploiting a fixed set of reference polling directions, i.e., the set $B$ of coordinate directions. The algorithm is formally described in Algorithm \ref{Alg:DF}.

% \begin{algorithm}[H]\caption{Feasible search paths method}%\leavevmode
% 	\label{Alg:DF}
% 	\begin{algorithmic}[1]
% 		\REQUIRE 
% 			$\delta \in (0,1)$, $B = \{e_1,\ldots,e_n,-e_1,\ldots,-e_n\}$,  $\sigma>0$.
% 		\STATE Set $k = 0$ and $\tilde{\alpha}_0 = 1$. Choose $x^0 \in C$.
% 		\WHILE{stopping condition not satisfied}
%         \STATE set success = \textit{False}
%         \FORALL{$b_i\in B$}
%             \STATE Choose $\gamma_{ik}$ feasible search path at $x^k$ s.t.\ $\gamma_{ik}'(0) = P_{T_C(x^k)}(b_i)$ 
%             \IF{$f(\gamma_{ik}(\tilde{\alpha}_k)) \le f(x^k) -\sigma \tilde{\alpha}_k^2 $}
%             \STATE $\alpha_k =\tilde{\alpha}_k$
%             \STATE set $x^{k+1} = \gamma_{ik}({\alpha}_k)$
%             \STATE $\tilde \alpha_{k+1} = 1$
%             \STATE set success = \textit{True}
%             \STATE \textbf{break}
%             \ENDIF
%             \ENDFOR
%             \IF{success = \textit{False}}
%             \STATE set $\alpha_k=0$ and $x^{k+1} = x^k$
%             \STATE set $\tilde{\alpha}_{k+1} = \delta \tilde{\alpha}_k$
%             \ENDIF
% 		\STATE set $k=k+1$
%         \ENDWHILE
% 		\RETURN $x^k$
% 	\end{algorithmic}
% \end{algorithm}

\begin{algorithm}[H]\caption{Feasible search paths method}%\leavevmode
	\label{Alg:DF}
	\begin{algorithmic}[1]
		\REQUIRE 
			$\delta \in (0,1)$, $B = \{e_1,\ldots,e_n,-e_1,\ldots,-e_n\}$,  $\sigma>0$, \rev{$\tau>1$, $\bar{\alpha}>0$}.
		\STATE Set $k = 0$ and $\tilde{\alpha}_0 = 1$. Choose $x^0 \in C$.
		\WHILE{stopping condition not satisfied}
        \STATE set success = \textit{False}
        \FORALL{$b_i\in B$}
            \STATE Choose $\gamma_{ik}$ feasible search path at $x^k$ s.t.\ $\gamma_{ik}'(0) = P_{T_C(x^k)}(b_i)$ 
            \IF{$f(\gamma_{ik}(\tilde{\alpha}_k)) \le f(x^k) -\sigma \tilde{\alpha}_k^2 $}
            \STATE $\alpha_k =\tilde{\alpha}_k$
            \STATE set $x^{k+1} = \gamma_{ik}({\alpha}_k)$
            \STATE \rev{$\tilde \alpha_{k+1} = \max\{\bar{\alpha},\tau\alpha_k\}$}
            \STATE set success = \textit{True}
            \STATE \textbf{break}
            \ENDIF
            \ENDFOR
            \IF{success = \textit{False}}
            \STATE set $\alpha_k=0$ and $x^{k+1} = x^k$
            \STATE set $\tilde{\alpha}_{k+1} = \delta \tilde{\alpha}_k$
            \ENDIF
		\STATE set $k=k+1$
        \ENDWHILE
		\RETURN $x^k$
	\end{algorithmic}
\end{algorithm}

Briefly, the method at each iteration scans through a set of feasible search paths, polling points for a given tentative stepsize $\tilde{\alpha}_k$.  As soon as one of these polling points is found providing a sufficient decrease as considered in Proposition \ref{Prop:DecObj}, it is chosen as a new iterate. The tentative stepsize for the next iteration \rev{is then increased, making sure it at least jumps back to a predefined threshold when it is small}. In case none of the considered feasible search paths provides sufficient decrease for the tentative stepsize, the iteration is declared unsuccessful and the tentative stepsize is reduced for the next iteration.

We begin the formal analysis of Algorithm \ref{Alg:DF} studying the asymptotic behavior of the sequences of tentative and actual stepsizes.

\begin{proposition}
\label{prop:stepsizes}
    Let $K\subseteq\{0,1,\ldots\}$ be the sequence of iterations of success in Algorithm \ref{Alg:DF} and let $\bar{K}$ its complementary, i.e., the sequence of unsuccessful iterations.  Let $\{\alpha_k\}$ and $\{\tilde \alpha_{k}\}$ be the sequences of stepsizes produced by Algorithm \ref{Alg:DF}. The following properties hold:
    \begin{enumerate}[(i)]
        \item $\lim\limits_{k\to\infty}\alpha_k=0;$
        \item if $K$ is infinite, then $$\lim_{k\in K,k\to\infty}\tilde\alpha_k=0;$$
        \item $\bar{K}$ is infinite and there exists a subsequence $K_1\subseteq \bar{K}$ such that
        $$\lim_{k\in K_1,k\to\infty}\tilde\alpha_k=0.$$
    \end{enumerate}
\end{proposition}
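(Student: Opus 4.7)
The plan is to prove the three claims in sequence, exploiting only the sufficient decrease condition, the parameter updates, and the boundedness of $f$ from below on $C$.

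For (i), I would first establish that $\{f(x^k)\}$ is monotonically non-increasing. Indeed, at every successful iteration the polling condition yields $f(x^{k+1})\le f(x^k) - \sigma \alpha_k^2$, while at every unsuccessful iteration $x^{k+1}=x^k$ and $\alpha_k=0$ by construction. Since the iterates remain in $C$ (feasible search paths take values in $C$) and $f\ge f^*$ on $C$, telescoping the decrease yields
$$\sum_{k=0}^{\infty} \sigma\alpha_k^2 \le f(x^0)-f^* < \infty,$$
which forces $\alpha_k\to 0$.

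Claim (ii) follows immediately from (i): whenever $k\in K$, the algorithm sets $\alpha_k=\tilde\alpha_k$ at line 7, so the two sequences agree on $K$ and inherit the same limit.

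For (iii), the plan splits in two steps. First I would show $\bar K$ is infinite by contradiction. If $\bar K$ were finite, from some index $k_0$ on every iteration would be successful, so we would have $\tilde\alpha_{k+1}=\max\{\bar\alpha,\tau\alpha_k\}\ge\bar\alpha$, and since successful iterations enforce $\alpha_{k+1}=\tilde\alpha_{k+1}$, this would give $\alpha_k\ge\bar\alpha$ for all $k>k_0$, contradicting (i). Second, I would exhibit the subsequence $K_1\subseteq\bar K$ along which $\tilde\alpha_k\to 0$, distinguishing two cases. If $K$ is finite, then from some $k_0$ onward every iteration is unsuccessful and $\tilde\alpha_{k+1}=\delta\tilde\alpha_k$, hence $\tilde\alpha_k=\delta^{k-k_0}\tilde\alpha_{k_0}\to 0$, and $K_1$ can be taken as $\{k\ge k_0\}$. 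If $K$ is infinite, then for $k\in K$ sufficiently large, $\alpha_k<\bar\alpha$ (by (i)) while $\tilde\alpha_{k+1}\ge\bar\alpha$, so the iteration $k+1$ cannot satisfy $\alpha_{k+1}=\tilde\alpha_{k+1}$ with $\alpha_{k+1}<\bar\alpha$; hence, for each sufficiently large $k'\in K$, iteration $k'-1$ lies in $\bar K$, whence $\tilde\alpha_{k'}=\delta\tilde\alpha_{k'-1}$ and therefore $\tilde\alpha_{k'-1}=\alpha_{k'}/\delta\to 0$. Taking $K_1=\{k'-1:\; k'\in K,\; k' \text{ large}\}\subseteq \bar K$ yields the desired subsequence.

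The main obstacle is the construction of $K_1$ in the case when $K$ is infinite: the update rule guarantees that $\tilde\alpha_{k+1}\ge\bar\alpha$ immediately after a success, so one cannot simply pick unsuccessful iterates that follow successes. The key insight is to instead look at unsuccessful iterations immediately preceding successes, since their tentative stepsize is just $\alpha_{k'}/\delta$ and thus inherits the convergence to $0$ from (i).
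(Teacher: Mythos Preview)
Your proposal is correct and follows essentially the same route as the paper: the sufficient-decrease/monotonicity argument for (i), the identification $\alpha_k=\tilde\alpha_k$ on $K$ for (ii), and for (iii) the contradiction with $\tilde\alpha_k\ge\bar\alpha$ to show $\bar K$ is infinite, followed by the same case split, with $K_1$ consisting of the unsuccessful predecessors of successful iterations when $K$ is infinite. The only cosmetic difference is that the paper obtains (i) by passing to the limit in the decrease inequality rather than via the telescoping sum, and it phrases the predecessor argument directly (from $\tilde\alpha_{k'}<\bar\alpha$ conclude $k'-1\in\bar K$) instead of via the contrapositive you sketch.
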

\begin{proof}
    By the instructions of Algorithm \ref{Alg:DF}, $\{f(x^k)\}$ is nonincreasing. Hence, $\{f(x^k)\}$ admits limit $f^*$ (which is finite by assumption).
    For all iterations $k$, we have
    $$f(x^{k+1})\le f(x^k)-\sigma\alpha_k^2.$$
    Note that in unsuccessful iterations it holds trivially as $\alpha_k=0$ and $x^{k+1}=x^k$. Therefore, taking the limits for $k\to\infty$,
    we have $$f^*\le f^*-\sigma \lim_{k\to\infty}\alpha_k^2,$$
    which implies that $\alpha_k\to0$.

    Now, let us consider the sequence $\{\tilde \alpha_k\}$. We start analyzing the subsequence $K$ of successful iterations. We can immediately note that, since $\alpha_k=\tilde{\alpha}_k$ for all $k\in K$ and $\alpha_k\to0$, if $K$ is infinite, then we have
    \begin{equation}
        \label{eq:proof_steps}
        \lim_{k\to\infty,k\in K}\tilde \alpha_{k}=0.
    \end{equation}

    Now, let us assume that $\bar{K}$ is finite. This would imply that $k\in K$ for all $k$ sufficiently large and thus \rev{$\alpha_k = $ $\tilde{\alpha}_k\ge \bar{\alpha}$}  for all $k$. This would contradict the fact that $\alpha_k\to0$.

    Hence, $\bar{K}$ is infinite. 
    If $K$ is finite, let $\bar{k}$ be the largest index in $K$. Then  \rev{for all $k>\bar{k}$ we have
    $\tilde{\alpha}_k = \tilde{\alpha}_{\bar{k}+1} \delta^{k-\bar{k}-1}$,}  which clearly goes to zero for $k\to\infty$. In this case we would thus have $\lim_{k\in\bar{K},k\to\infty}\tilde{\alpha}_k = 0.$

    Let us now assume that both $K$ and $\bar{K}$ are infinite. Since \eqref{eq:proof_steps} holds, for $k\in K$ sufficiently large we necessarily have \rev{$\tilde{\alpha}_k<\bar{\alpha}$} and thus $(k-1)\in \bar{K}$.
    The sequence $K_1\subseteq \bar{K}$ such that $(k+1)\in K$ for all $k\in K_1$ is thus infinite. We also have $\tilde{\alpha}_k = \delta^{-1}\tilde{\alpha}_{k+1}=\delta^{-1}\alpha_{k+1}$.
    Since $\alpha_k\to 0$, we then get that $\tilde{\alpha}_k\to 0$ for $k\in K_1$, $k\to \infty$.
\end{proof}

We also need to state another result concerning stepsizes and the sufficient decrease condition before turning to the convergence analysis.

% \begin{lemma}
% \label{lemma:steps_df}
%     Let $k$ be any unsuccessful iteration in Algorithm \ref{Alg:DF} \textcolor{green}{[such that $\tilde{\alpha}_k\le \bar{\alpha}$]}. Then, condition
%     $$f(\gamma_{ik}(\delta^j))-f(\gamma_{ik}(0))> -\sigma \delta^{2j}$$
%     holds for all $i=1,\ldots,2n$ and for all $j$ such that $\tilde{\alpha}_k\le\delta^j\le 1$ \textcolor{green}{[$...\le \bar{\alpha}$]}.
% \end{lemma}
% \begin{proof}
%     Let $m_k$ be the largest index such that $m_k<k$ and iteration $m_k$ is of success. We then know, by the instructions of Algorithm \ref{Alg:DF}, that $\tilde{\alpha}_k = \delta^{k-m_k-1}$ and that for all $j=1,\ldots,k-m_k$ iteration $m_k+j$ was of unsuccess. Thus, for all $j=1,\ldots,k-m_k$, we have $x^{m_k+j} = x^k$ (which implies $\gamma_{i(m_k+j)}=\gamma_{ik}$), $\tilde{\alpha}_{m_k+j}=\delta^{j-1}$ 
%     and, for all $i=1,\ldots,2n$
%     $$f(\gamma_{i(m_k+j)}(\delta^{j-1}))-f(\gamma_{i(m_k+j)}(0))> -\sigma \delta^{2(j-1)},$$
%     i.e.,
%     $$f(\gamma_{ik}(\delta^{j-1}))-f(\gamma_{ik}(0))> -\sigma \delta^{2(j-1)}.$$
%     Since the inequality above holds for all $i=1,\ldots,2n$ and for all $j=1,\ldots, k-m_k$, i.e., for all $\delta^h$ such that $\tilde{\alpha}_k=\delta^{k-m_k-1}\le\delta^h\le\delta^0=1$, this completes the proof.
% \end{proof}

\begin{lemma}
\label{lemma:steps_df}
    \rev
    {There exists $\hat{\alpha}\ge \bar{\alpha}$ such that, for any unsuccessful iteration $k$ in Algorithm \ref{Alg:DF} with $k$ sufficiently large, the condition
    $$f(\gamma_{ik}(\hat{\alpha}\delta^j))-f(\gamma_{ik}(0))> -\sigma \hat{\alpha}^2\delta^{2j}$$
    holds for all $i=1,\ldots,2n$ and for all $j$ such that $\tilde{\alpha}_k\le\hat\alpha\delta^j\le \hat{\alpha}$.}
\end{lemma}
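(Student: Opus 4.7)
The plan is to choose $\hat{\alpha}=\bar{\alpha}$ and trace backwards from the unsuccessful iteration $k$ to the most recent successful iteration, exploiting the fact that the sequence of tentative stepsizes between them forms exactly the geometric progression $\bar{\alpha},\bar{\alpha}\delta,\ldots,\bar{\alpha}\delta^{k-\bar k-1}=\tilde{\alpha}_k$.

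First, I would invoke Proposition~\ref{prop:stepsizes}(i) to obtain $\alpha_k\to 0$, so there exists $\tilde{k}$ with $\tau\alpha_k<\bar{\alpha}$ for every $k\ge\tilde{k}$. It follows that after any successful iteration $\ell\ge\tilde{k}$ the update rule gives $\tilde{\alpha}_{\ell+1}=\max\{\bar{\alpha},\tau\alpha_\ell\}=\bar{\alpha}$. One sets $\hat{\alpha}:=\bar{\alpha}$ whenever the set $K$ of successful iterations is infinite; in the auxiliary case $K$ finite, one enlarges $\hat{\alpha}$ to $\tilde{\alpha}_{\max K+1}$, which is still $\ge\bar{\alpha}$ by the update rule and is finite by boundedness of $\{\alpha_k\}$.

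Second, for any unsuccessful iteration $k$ sufficiently large so that the most recent successful iteration $\bar k<k$ (if any) satisfies $\bar k\ge\tilde{k}$, the above argument gives $\tilde{\alpha}_{\bar k+1}=\bar{\alpha}=\hat{\alpha}$. Since iterations $j\in\{\bar k+1,\ldots,k\}$ are all unsuccessful, the update rule yields $\tilde{\alpha}_j=\hat{\alpha}\delta^{j-\bar k-1}$ and $x^j=x^{\bar k+1}=x^k$ for each such $j$. Because the polling curves are the projection-induced arcs $\gamma_{ij}(t)=P_C(x^j+tb_i)$, which depend only on the current iterate and the coordinate direction, also $\gamma_{ij}=\gamma_{ik}$ throughout this range of $j$. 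The failure of iteration $j$ then reads, for every $i\in\{1,\ldots,2n\}$,
\begin{equation*}
f(\gamma_{ij}(\tilde{\alpha}_j))>f(x^j)-\sigma\tilde{\alpha}_j^2.
\end{equation*}
Substituting the identifications above and setting $j':=j-\bar k-1$, one obtains
\begin{equation*}
f(\gamma_{ik}(\hat{\alpha}\delta^{j'}))-f(\gamma_{ik}(0))>-\sigma\hat{\alpha}^2\delta^{2j'},\qquad j'\in\{0,1,\ldots,k-\bar k-1\},
\end{equation*}
which is precisely the inequality of the lemma, with $\hat{\alpha}\delta^{j'}$ sweeping the whole geometric grid from $\hat{\alpha}$ down to $\tilde{\alpha}_k$ as $j'$ varies.

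The main obstacle, in my view, is the bookkeeping required to pick $\hat{\alpha}$ uniformly in $k$: one must absorb the transient regime and the corner cases where $K$ is finite or empty, in which the tentative stepsize may not have been reset exactly to $\bar{\alpha}$ by the update rule. A secondary subtlety is the identification $\gamma_{ij}=\gamma_{ik}$ when $x^j=x^k$; this is automatic for the concrete projection-based curves used in the paper, but for a more general feasible-search-path algorithm one would need to add such a structural assumption on the curve selection.
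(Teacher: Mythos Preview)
Your proposal is correct and follows essentially the same route as the paper: trace back from the unsuccessful iteration $k$ to the most recent successful one $m_k$, use that all intermediate iterates coincide with $x^k$ so the poll curves agree, read off the failed sufficient-decrease tests along the geometric ladder $\tilde{\alpha}_{m_k+1}\delta^h$, and then argue that $\tilde{\alpha}_{m_k+1}$ stabilizes at a constant $\hat{\alpha}\ge\bar{\alpha}$ for $k$ large (equal to $\bar{\alpha}$ when $K$ is infinite, equal to $\tilde{\alpha}_{\bar{k}+1}$ when $K$ is finite with last success $\bar{k}$). Your observation that the identification $\gamma_{ij}=\gamma_{ik}$ when $x^j=x^k$ is an implicit structural assumption---automatic for the projection arcs but not for arbitrary feasible search paths---is accurate; the paper's proof asserts it without comment.
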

\begin{proof}
    Let $m_k$ be the largest index such that $m_k<k$ and iteration $m_k$ is of success. We then know, by the instructions of Algorithm \ref{Alg:DF}, that \rev{$\tilde{\alpha}_k = \tilde{\alpha}_{m_k+1}\delta^{k-m_k-1}$}  and that for all $j=1,\ldots,k-m_k$ iteration $m_k+j$ was of unsuccess. Thus, \rev{ for all $j=1,\ldots,k-m_k$, we have $x^{m_k+j} = x^k$ (which implies $\gamma_{i(m_k+j)}=\gamma_{ik}$), $\tilde{\alpha}_{m_k+j}=\delta^{j-1}\tilde{\alpha}_{m_k+1}$ 
    and, for all $i=1,\ldots,2n$
    $$f(\gamma_{i(m_k+j)}(\tilde{\alpha}_{m_k+1}\delta^{j-1}))-f(\gamma_{i(m_k+j)}(0))> -\sigma \tilde{\alpha}_{m_k+1}^2 \delta^{2(j-1)},$$
    i.e.,
    $$f(\gamma_{ik}(\tilde{\alpha}_{m_k+1}\delta^{j-1}))-f(\gamma_{ik}(0))> -\sigma \tilde{\alpha}_{m_k+1}^2 \delta^{2(j-1)}.$$
    Since the inequality above holds for all $i=1,\ldots,2n$ and for all $j=1,\ldots, k-m_k$, i.e., for all $\tilde{\alpha}_{m_k+1}\delta^h$ such that $\tilde{\alpha}_k=\tilde{\alpha}_{m_k+1}\delta^{k-m_k-1}\le\tilde{\alpha}_{m_k+1}\delta^h\le\tilde{\alpha}_{m_k+1}\delta^0=\tilde{\alpha}_{m_k+1}$, the proof is complete if we can show that $\tilde{\alpha}_{m_k+1}$ is a constant greater or equal than $\bar{\alpha}$ for $k$ sufficiently large.

    Now, we first consider the case where the sequence $K$ of successful iterations is infinite. By Proposition \ref{prop:stepsizes} we know $\tilde{\alpha}_k\to 0$ for $k\in K$, which implies that for $k\in K$ sufficiently large $\tilde{\alpha}_k<\frac{1}{\tau}\bar{\alpha}$ and $\tilde{\alpha}_{k+1} = \bar{\alpha}$. Therefore, since $m_k\to \infty$ and $m_k\in K$, we can deduce that for $k$ sufficiently large $\tilde{\alpha}_{m_k+1} = \bar{\alpha}$. 

    On the other hand, let $K$ be finite and let $\bar{k}$ the last successful iteration. For $k$ sufficiently large we will then have $m_k = \bar{k}$ and thus $\tilde{\alpha}_{m_k+1} = \tilde{\alpha}_{\bar{k}+1}\ge \bar{\alpha}$ by the instructions of the algorithm. 
    }
\end{proof}

We are now able to state the main convergence result of this work.
\begin{proposition}
\label{prop:conv_df}
    Assume $C$ satisfies assumptions (a1)-(a3). Let $\{x^k\}$ be the sequence generated by Algorithm \ref{Alg:DF}, assuming the sequences of feasible search paths $\{\gamma_{ik}\}$ satisfies Assumption \ref{ass:cont_fp}. Then the sequence $\{x^k\}$ produced by Algorithm \ref{Alg:DF} admits accumulation points and there exists at least one accumulation point which is stationary for the problem.
\end{proposition}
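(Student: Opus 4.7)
My plan is to first secure the existence of accumulation points by compactness, then isolate a limit point $\bar x$ together with a subsequence along which tentative stepsizes also vanish, and finally deduce stationarity at $\bar x$ by exploiting the failed sufficient-decrease information provided by Lemma \ref{lemma:steps_df} together with the tangent-cone characterization of Proposition \ref{prop:tseng}.

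Since $C$ is compact (assumption (a2)) and $\{x^k\}\subseteq C$, the sequence $\{x^k\}$ has accumulation points. To pick a good one, I would invoke Proposition \ref{prop:stepsizes}(iii) to extract an infinite subsequence $K_1\subseteq\bar{K}$ of unsuccessful iterations along which $\tilde\alpha_k\to 0$. Then by compactness a further subsequence $K_2\subseteq K_1$ can be chosen with $x^k\to \bar{x}\in C$; this $\bar x$ will be the candidate stationary point.

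For each $k\in K_2$ large enough, Lemma \ref{lemma:steps_df} supplies a fixed constant $\hat\alpha\ge\bar\alpha$ such that, for every $b_i\in B$ and every integer $j\ge 0$ with $\tilde\alpha_k\le \hat\alpha\delta^j\le \hat\alpha$,
$$f(\gamma_{ik}(\hat\alpha\delta^j))-f(x^k)>-\sigma\hat\alpha^2\delta^{2j}.$$
For any fixed $j\ge 0$, since $\hat\alpha\delta^j$ is a positive constant and $\tilde\alpha_k\to 0$, the range constraint is met for all sufficiently large $k\in K_2$. At this point I would apply Assumption \ref{ass:cont_fp} with the constant velocity sequence $v_k\equiv b_i$: it provides a limit curve $\bar\gamma_i$ at $\bar x$ with $\bar\gamma_i'(0)=P_{T_C(\bar x)}(b_i)$ and $\gamma_{ik}(t)\to\bar\gamma_i(t)$ pointwise along $K_2$. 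Passing to the limit $k\to\infty$ in $K_2$ with $j$ and $b_i$ fixed, continuity of $f$ yields
$$f(\bar\gamma_i(\hat\alpha\delta^j))-f(\bar x)\ge -\sigma\hat\alpha^2\delta^{2j}$$
for every $j\ge 0$ and every $b_i\in B$.

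Finally, I would fix $b_i$, divide by $\hat\alpha\delta^j>0$, and let $j\to\infty$. The right-hand side tends to zero, while the left-hand side, by the definition of derivative and the chain rule applied at $t=0$, converges to $\mathcal{D}_{f\circ\bar\gamma_i}(0)=\nabla f(\bar x)^T P_{T_C(\bar x)}(b_i)$. Hence $\nabla f(\bar x)^T P_{T_C(\bar x)}(b_i)\ge 0$ for every $b_i\in B$, and Proposition \ref{prop:veloc_canon} guarantees that these projected vectors positively span $T_C(\bar x)$; Proposition \ref{prop:tseng} then delivers stationarity of $\bar x$. The delicate point, which I expect to be the main obstacle, is the careful bookkeeping of the two limits: one must first send $k\to\infty$ with $j$ held fixed, so that Assumption \ref{ass:cont_fp} can be invoked and $\tilde\alpha_k$ can be driven below $\hat\alpha\delta^j$, and only afterwards send $j\to\infty$ to recover the directional derivative along $\bar\gamma_i$.
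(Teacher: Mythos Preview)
Your proposal is correct and follows essentially the same route as the paper: extract $K_1\subseteq\bar K$ with $\tilde\alpha_k\to 0$ via Proposition~\ref{prop:stepsizes}, pass to a convergent subsequence $K_2$, invoke Lemma~\ref{lemma:steps_df} at each fixed level $\hat\alpha\delta^j$, take the $k$-limit using Assumption~\ref{ass:cont_fp}, then the $j$-limit to obtain $\nabla f(\bar x)^T P_{T_C(\bar x)}(b_i)\ge 0$, and conclude via Propositions~\ref{prop:veloc_canon} and~\ref{prop:tseng}. The only cosmetic difference is that the paper phrases the final step as a contradiction (assuming $\bar x$ nonstationary and deriving $\nabla f(\bar x)^T\bar d\ge 0$ against $\nabla f(\bar x)^T\bar d\le -\epsilon^2$), whereas you argue stationarity directly; the double-limit bookkeeping you flag is exactly the point the paper handles in the same order.
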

\begin{proof}
    Let us consider the sequence of unsuccessful iterations $\bar{K}$, which is infinite according to Proposition \ref{prop:stepsizes}. Let us consider the further subsequence $K_1\subseteq \bar{K}$ such that $\tilde{\alpha}_k\to0$ for $k\in K_1$, $k\to\infty$, which exists by Proposition \ref{prop:stepsizes}.
    
    By the instructions of the algorithm, $\{x^k\}\subseteq C$. Since $C$ is compact, $\{x^k\}$ admits accumulation points. 
    We can apply the same reasoning to $\{x^k\}_{K_1}\subseteq C$ to state that there is at least an accumulation point of  $\{x^k\}_{K_1}$.
    
    Now, let us assume that any of the accumulation points of $\{x^k\}_{K_1}$ are nonstationary; in particular, let $K_2\subseteq K_1$ such that 
    $$\lim_{k\in K_2,k\to\infty}x^k= \bar{x},$$
    with $\bar{x}$ nonstationary, i.e., letting $\bar{d}=P_{T_C(\bar{x})}(-\nabla f(\bar{x}))$, $\|\bar{d}\|\ge\epsilon > 0$ and thus $\nabla f(\bar{x})^T\bar{d}\le - \epsilon^2$.

    Since the iterations in $K_2$ are unsuccessful, we have for all $k\in K_2$ and for all $\gamma_{ik}$, $i=1,\ldots,2n$, that
    $$f(\gamma_{ik}(\tilde{\alpha}_k))-f(\gamma_{ik}(0))> -\sigma \tilde{\alpha}_k^2.$$

    Let now $q$ be an arbitrary positive integer \rev{ and let $\hat{\alpha}$ be the value satisfying Lemma \ref{lemma:steps_df} }. Since $\tilde{\alpha}_k\to0$ for $k\in K_2$, $k\to\infty$, we have $\tilde{\alpha}_k<\rev{\hat{\alpha}}\delta^q$ for $k\in K_2$ sufficiently large and then, by Lemma \ref{lemma:steps_df}, 
    $$f(\gamma_{ik}(\rev{\hat{\alpha}}\delta^q))-f(\gamma_{ik}(0))> -\sigma \rev{\hat{\alpha}^2}\delta^{2q}.$$
    Dividing both sides of the inequality by $\rev{\hat{\alpha}}\delta^q$ and taking the limit for $k\in K_2$, $k\to\infty$, we get 
    $$\lim_{k\in K_2,k\to\infty}\frac{f(\gamma_{ik}(\rev{\hat{\alpha}}\delta^q))-f(\gamma_{ik}(0))}{\rev{\hat{\alpha}}\delta^q}\ge -\sigma \rev{\hat{\alpha}}\delta^q.$$
    Recalling Assumption \ref{ass:cont_fp} for all sequences $\{\gamma_{ik}\}$, we can then write
    $$\frac{f(\bar{\gamma}_{i}(\rev{\hat{\alpha}}\delta^q))-f(\bar{\gamma}_{i}(0))}{\rev{\hat{\alpha}}\delta^q}\ge -\sigma\rev{\hat{\alpha}} \delta^q,$$
    where $\bar{\gamma}_i$ is a feasible search path at $\bar{x}$ such that $\bar{\gamma}_i'(0) = P_{T_C(\bar{x})}(b_i)$.

    Since $q$ is arbitrary in $\mathbb{N}$, we can take the limits for $q\to\infty$, i.e., for $\rev{\hat{\alpha}}\delta^q\to 0$, obtaining 
    $$\mathcal{D}_{f\circ \bar{\gamma}_i}(0) = \nabla f(\bar{x})^T\bar{\gamma}_i'(0)\ge 0.$$
    By Proposition \ref{prop:veloc_canon}, we have $\operatorname{cone}(\{\bar{\gamma}'_i(0)\mid i=1,\ldots,2n\})=T_{C}(\bar{x})$. Since $\bar{d}\in T_C(\bar{x})$, there exist $\beta_1,\ldots,\beta_{2n}\ge0$ such that $\bar{d} = \sum_{i=1}^{2n}\beta_i\bar{\gamma}_i'(0)$; thus we can conclude that
    $$\nabla f(\bar{x})^T\bar{d} = \sum_{i=1}^{2n}\beta_i\nabla f(\bar{x})^T\bar{\gamma}_i'(0)\ge 0,$$
    which is absurd as we had assumed $\nabla f(\bar{x})^T\bar{d}\le -\epsilon^2<0$. Therefore, all limit points of $\{x^k\}_{K_2}$ are stationary.
\end{proof}

\begin{remark}
\label{rem::rem1}
    Once again, as already outlined in Section \ref{sec:curves}, any curve of the form $\gamma_{ik}(t) = P_C(x^k+tb_i)$ is a feasible search path at $x^k$. We also have that, similarly as in the gradient based case, this particular choice of the curves satisfies Assumption \ref{ass:cont_fp}, being the limit curve, by the continuity of $P_C$, equal to $\bar{\gamma}_i(t) = P_C(\bar{x}+tb_i)$ with $\bar{\gamma}_i'(0) = P_{T_C(\bar{x})}(b_i)$. 
    Thanks to this property, we are finally guaranteed that the projection based method soundly achieves the desired result: a stationarity point will be reached without the need of constructing a specific set of search directions for each iterate, based on the local form of the feasible set,\rev{\label{rev:in_remark} nor by asymptotically spanning a dense of search directions} \rev{(which is required, for instance, for \cite{ortho09,Custódio03052024,galvan2021parameter})}; instead, we just need to rely on a suitable fixed set of directions and the projection operator. An example of the resulting curves is shown in Figure \ref{fig:curves}. 
\end{remark}

\begin{figure}[!h]
    \centering
    \includegraphics[width=0.5\linewidth]{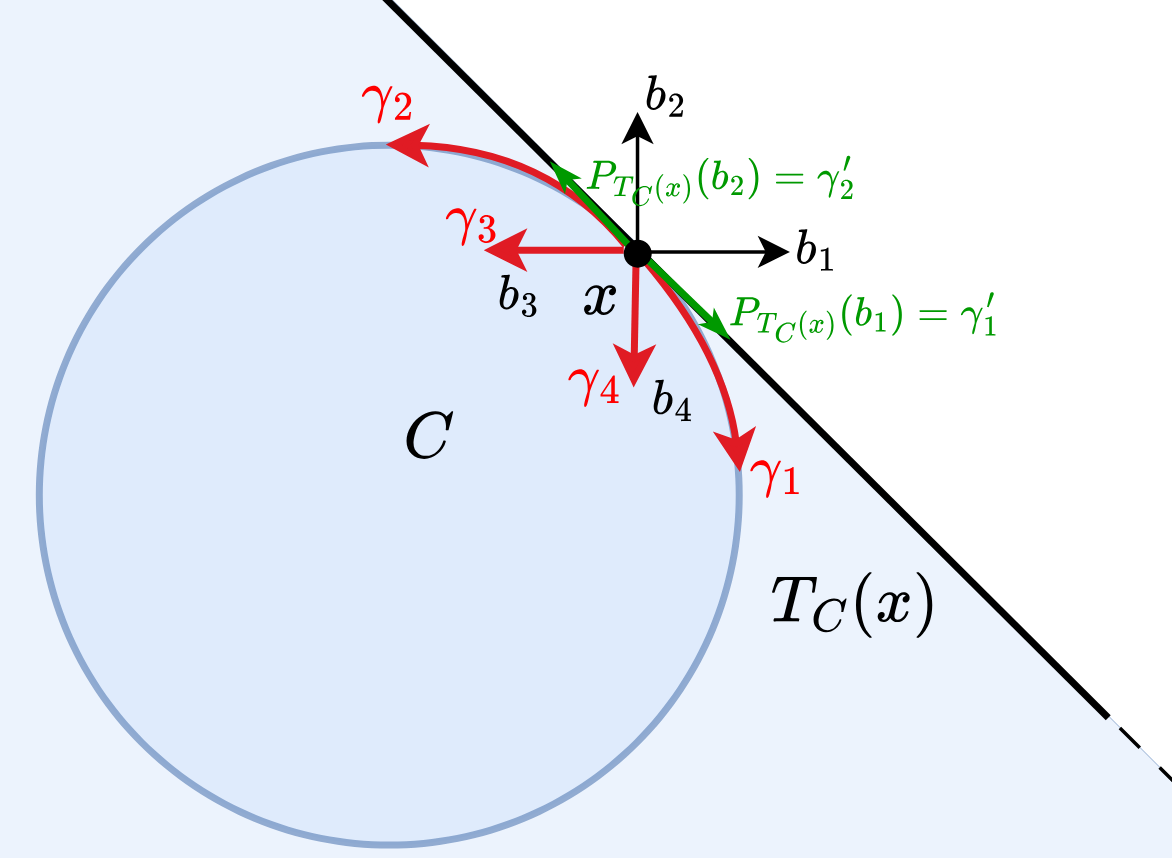}
    \caption{Search curves implicitly defined by the coordinate directions and projection.}
    \label{fig:curves}
\end{figure}

\section{Numerical results}
\label{sec::num_res}

In this section, we show some \rev{numerical} results assessing the effectiveness and efficiency of our approach on a set of standard benchmark problems. The code\footnote{The implementation code of our approach can be found at \url{https://github.com/pierlumanzu/FSP}.} for the experimentation was written in \texttt{Python3}. The experiments were run on a computer with the following characteristics: Ubuntu 24.04 OS, Intel(R) Core(TM) i5-10600KF 6 cores
4.10GHz, 32 GB RAM. 

In the rest of the section, we denote Algorithm \ref{Alg:DF} as \texttt{FSP}. In this experimental setting, \texttt{FSP} was compared with \rev{the following methods:} 
\begin{itemize}
    \item \rev{a state-of-the-art projection-based solver, proposed in \cite{galvan2021parameter}, briefly discussed in Section \ref{sec::intro} and denoted in the following as \texttt{PPM}\footnote{The implementation code of \texttt{PPM} can be found at \url{https://github.com/jth3galv/dfppm}.};}
    \item \rev{the approach introduced in \cite{Custódio03052024}, which integrates the Directional Direct Search (\texttt{DDS}) framework with the Spectral Projected Gradient (\texttt{SPG}) method by replacing the true gradient with a simplex-based approximation; this algorithm is denoted as \texttt{SPG-DDS}; in particular, we tested the same variants considered in the numerical experiments of \cite{Custódio03052024}: \texttt{SPG-DDS-proj}, where infeasible poll points are projected before evaluation, and \texttt{SPG-DDS-barrier}, where infeasible points are discarded;}
    \item \rev{\texttt{DDS-proj}, analogous to \texttt{SPG-DDS-proj} but without the \texttt{SPG} component, which primarily differs from our method by using a dense of search directions in the poll step;}
    \item \rev{\texttt{ORTHOMADS} \cite{ortho09}, a specific instance of the Mesh Adaptive Direct Search (\texttt{MADS}) family of algorithms, which employs at each iteration a deterministic set of $2n$ orthogonal directions.}
\end{itemize}
% A small introduction of this approach can be also found in Section \ref{sec::intro}. 
Regarding the stopping criteria, \rev{all} algorithms had a budget of 10000 function evaluations \rev{unless explicitly stated otherwise}. For \texttt{FSP}, we set $\delta = 0.5$, $\sigma = 10^{-3}$, \rev{$\tau=1.025$ and ${\bar{\alpha}=10^{-6}}$}
% ; in successful iterations, the tentative stepsize is updated according to $\tilde{\alpha}_{k+1} = \max\{10^{-6}, \alpha_k/0.99\}$; note that this rule does not spoil convergence theory: since $\alpha_k\to 0$, for $k$ sufficiently large $\tilde{\alpha}_{k+1}$ is reset to a constant value
. \rev{\label{rev2::opportunistic}Based on preliminary experiments, we adopted a dynamic ordering of the directions in $B$: each iteration was stopped as soon as a direction proved successful, and the next iteration began with that direction. Moreover, the directions $[1,\ldots,1]^\top$ and $[-1,\ldots,-1]^\top$ were included in $B$, and all directions were tested during the first iteration of the method. N}ote that \rev{these rules do} not spoil convergence theory\rev{.} For \texttt{PPM}, we set the parameters values according to the most robust configuration proposed in \cite{galvan2021parameter}, including a dynamic weighting of the distance of the point $x$ to the feasible set $C$. \rev{Both \texttt{FSP} and \texttt{PPM} were stopped when the tentative step sizes fell below a threshold of $10^{-7}$.} \rev{For \texttt{SPG-DDS} variants and \texttt{DDS-proj} we adopted the parameter settings suggested in \cite{Custódio03052024}, using the Matlab code provided by the authors;  for \texttt{ORTHOMADS}, we followed the configuration described in \cite{ortho09} and implemented in the \texttt{NOMAD} library \cite{nomad22}.}

\begin{table}[htb]
    \centering
    \footnotesize
    \renewcommand{\arraystretch}{1.25}
    \begin{tabular}{|c|c||c|}
    \hline
         \textit{Sources} & $n$ & \textit{Problems} \\
         \hline
         \hline
         \multirow{4}{*}{\cite{abramson2008pattern, hock1980test, schittkowski2012more, gould2015cutest}}&$2$& HS22, HS232 \\
         \cline{2-3}
         &$3$& HS29, HS65 \\
         \cline{2-3}
         &$4$& HS43 \\
         \cline{2-3}
         &$6, 7, 8$&AS6, AS7\\
         \hline
         \hline
         \multirow{12}{*}{\cite{gould2015cutest}}&\multirow{3}{*}{$2$}& AKIVA, BEALE, BOXBODLS, BRANIN, \\
         &&BRKMCC, BROWNBS, CAMEL6, CLIFF, \\
         &&CLUSTERLS, CUBE, DANIWOODLS, BOX2 \\
         \cline{2-3}
         &$3$& BARD, YFITU, ALLINIT, BIGGS3 \\
         \cline{2-3}
         &\multirow{2}{*}{$4$}& DEVGLA1, HATFLDB, HIMMELBF, LEVYMONT7, \\
         &&PALMER2, PALMER5D, POWERSUMB \\
         \cline{2-3}
         &$5$&DEVGLA2B, HS45, LEVYMONT8, BIGGS5\\
         \cline{2-3}
         &$6$&HART6, LANCZOS1LS \\
         \cline{2-3}
         &$8$&GAUSS1LS \\
         \cline{2-3}
         &$10$&HILBERTB, TRIGON2 \\
         \cline{2-3}
         &$22$&VANDANMSLS \\
         \cline{2-3}
         &$25$&HATFLDC\\
         \hline
    \end{tabular}
    \caption{Benchmark of problems used for experimentation.}
    \label{tab::problems}
\end{table}

As for the benchmark problems, they are listed in Table \ref{tab::problems}. The first set of instances were also used in \cite{galvan2021parameter} for testing the performance of \texttt{PPM}, while the second set is composed by \texttt{CUTEst} \cite{gould2015cutest} problems with size $n$ ranging from 2 to 25. Unless otherwise stated, all the instances were tested using \rev{a unit hyper-sphere constraint, i.e., $C = \{x \in \mathbb{R}^n\mid\|x-c\|^2 \le 1\}$, where $c \in \mathbb{R}^n$ denotes the center of the hyper-sphere.} We recall that the projection onto such set can be calculated in closed form according to: 
\rev{\begin{equation*}
    P_C(x) = 
    \begin{cases} 
        c+\frac{x-c}{\|x-c\|}, & \mbox{if }x \not\in C, \\ x, & \mbox{otherwise.}
    \end{cases}
\end{equation*}}
Both algorithms start, in each test problem, with the same feasible solution.

To assess the performance of the algorithms, we primarily focused on the number of function evaluations $n_f$ required by each method, i.e., the typical efficiency metric in black-box optimization; we are also interested in the number $n_p$ of times the projection of an unfeasible point is computed; analyzing this quantity provides an insight about efficiency in cases where projection is available, but costly; we also looked the objective function value $f^\star$ returned by each method.
For a compact visualization of the results, we made use of the performance profiles \cite{dolan2002benchmarking} \rev{and the data profiles \cite{dataprof09}, which represent cumulative distribution functions showing, for each solver, the percentage of problems solved within a tolerance $\varepsilon > 0$ given a specific budget of function evaluations}.

\subsection{First set of problems}

In this section, we discuss the results reported in Table{s \ref{tab::first_set_c0}-\ref{tab::first_set_c5}}, obtained by \rev{\texttt{FSP} and \texttt{PPM}} on the first set of problems of Table \ref{tab::problems} \rev{with hyper-sphere centers $c = [0,\ldots,0]^\top$ and $c = [5,\ldots,5]^\top$, respectively}. 

\begin{table}[htb]
   \centering
   \footnotesize
   \renewcommand{\arraystretch}{1.25} 
     \begin{tabular}{|c||c|c|c||c|c|c|} 
     \hline 
     \multirow{2}{*}{\textit{Problem}} &\multicolumn{3}{c||}{\texttt{FSP}} & \multicolumn{3}{c|}{\texttt{PPM}}\\
       \cline{2-7}
       & $f^\star$ & $n_f$ & $n_p$ & $f^\star$ & $n_f$ & $n_p$  \\
       \hline
       HS22& \textbf{1.528} & \textbf{241} & \textbf{128} & \textbf{1.528} & 327 & 256 \\
       \hline
       HS232& \textbf{-0.038} & \textbf{206} & \textbf{109} & \textbf{-0.038} & 434 & 321 \\
       \hline
       HS29& \textbf{-0.192} & \textbf{193} & \textbf{97} & \textbf{-0.192} & 365 & 261 \\
       \hline
       HS65& \textbf{26.548} & \textbf{440} & \textbf{246} & \textbf{26.548} & 553 & 469 \\
       \hline
       HS43& \textbf{-21.435} & 665 & \textbf{365} & \textbf{-21.435} & \textbf{519} & 444 \\
       \hline
       AS6 ($n=6$)& \textbf{2.101} & \textbf{351} & \textbf{177} & \textbf{2.101} & 891 & 722 \\
       \hline
       AS6 ($n=7$)& \textbf{2.708} & \textbf{402} & \textbf{203} & \textbf{2.708} & 1314 & 1088 \\
       \hline
       AS6 ($n=8$)& \textbf{3.343} & \textbf{451} & \textbf{227} & \textbf{3.343} & 3754 & 3424 \\
       \hline
       AS7 ($n=6$)& \textbf{0.0} & 1047 & 26 & \textbf{0.0} & \textbf{313} & \textbf{4} \\
       \hline
       AS7 ($n=7$)& \textbf{0.0} & 1336 & 31 & \textbf{0.0} & \textbf{364} & \textbf{4} \\
       \hline
       AS7 ($n=8$)& \textbf{0.0} & 1628 & 38 & \textbf{0.0} & \textbf{415} & \textbf{4} \\
       \hline
  \end{tabular}
    \caption{Results on the first set of problems listed in Table \ref{tab::problems} \rev{with hyper-sphere centered at $c = [0,\ldots,0]^\top$.}}
    \label{tab::first_set_c0}
\end{table}

\begin{table}[htb]
    \centering
   \footnotesize
   \renewcommand{\arraystretch}{1.25}
   \begin{tabular}{|c||c|c|c||c|c|c|} 
     \hline 
     \multirow{2}{*}{\textit{Problem}} &\multicolumn{3}{c||}{\texttt{FSP}} & \multicolumn{3}{c|}{\texttt{PPM}}\\
       \cline{2-7}
       & $f^\star$ & $n_f$ & $n_p$ & $f^\star$ & $n_f$ & $n_p$  \\
       \hline
       HS22& \textbf{16.0} & \textbf{242} & \textbf{129} & \textbf{16.0} & 336 & 259 \\
       \hline
       HS232& \textbf{-29.373} & \textbf{234} & \textbf{133} & \textbf{-29.373} & 2037 & 2034 \\
       \hline
       HS29& \textbf{-173.494} & \textbf{202} & \textbf{102} & \textbf{-173.494} & 831 & 822 \\
       \hline
       HS65& \textbf{0.0} & 438 & 16 & \textbf{0.0} & \textbf{336} & \textbf{5} \\
       \hline
       HS43& \textbf{-12.436} & \textbf{539} & \textbf{303} & \textbf{-12.436} & 571 & 483 \\
       \hline
       AS6 ($n=6$)& \textbf{77.404} & \textbf{337} & \textbf{176} & \textbf{77.404} & 1822 & 1639 \\
       \hline
       AS6 ($n=7$)& \textbf{91.834} & \textbf{385} & \textbf{201} & \textbf{91.834} & 6883 & 6780 \\
       \hline
       AS6 ($n=8$)& \textbf{106.373} & \textbf{433} & \textbf{226} & \textbf{106.373} & 1727 & 1529 \\
       \hline
       AS7 ($n=6$)& \textbf{126.505} & \textbf{337} & \textbf{176} & \textbf{126.505} & 1023 & 823 \\
       \hline
       AS7 ($n=7$)& \textbf{149.542} & \textbf{385} & \textbf{201} & \textbf{149.542} & 830 & 623 \\
       \hline
       AS7 ($n=8$)& \textbf{172.716} & \textbf{433} & \textbf{226} & \textbf{172.716} & 733 & 488 \\
       \hline
  \end{tabular}
    \caption{\rev{Results on the first set of problems listed in Table \ref{tab::problems} with hyper-sphere centered at $c = [5,\ldots,5]^\top$.}}
    \label{tab::first_set_c5}
\end{table}

We immediately observe that the two algorithms always returned solutions with the same function value. Moreover, an overall advantage of \texttt{FSP} w.r.t.\ its competitor is attested in the table in terms of both $n_f$ and $n_p$. These results are particularly encouraging if we take into account that \texttt{PPM} is based on CS-DFN \cite{galvan2021parameter}, which implements some sophisticated mechanisms aimed at speeding up the computation (such as direction-specific stepsizes, extrapolation steps), compared to the very basic setup of \texttt{FSP}. 
By a careful look at the results, we also get some insight from the \rev{cases of AS7 instances with $c=[0,\ldots,0]^T$ and HS65 instance with $c = [5,\ldots,5]^T$}, where \texttt{PPM} has a better performance: in these problems the solution lies in the interior of the unit hyper-sphere. We can speculate that the refined mechanism of CS-DFN have a greater impact in this ``unconstrained'' scenario; yet, \texttt{FSP} is still able to solve the problem with a reasonable cost; a future integration of the CS-DFN mechanisms within \texttt{FSP} might thus be worth of future investigations. 

\begin{table}[htb]
    \centering
    \footnotesize
    \renewcommand{\arraystretch}{1.25}
    \begin{tabular}{|c||c|c|c|c||c|c|c|c|} 
   	\hline 
   	\multirow{2}{*}{\textit{Problem}} &\multicolumn{4}{c||}{\texttt{FSP}} & \multicolumn{4}{c|}{\texttt{PPM}}\\
   	\cline{2-9}
   	& $f^\star$ & $n_f$ & $n_p$ & $T$ & $f^\star$ & $n_f$ & $n_p$ & $T$  \\
   	\hline
   	\hline
       HS29& \textbf{-22.627} & \textbf{406} & \textbf{221} & \textbf{2.77} & \textbf{-22.627} & 634 & 565 & 4.92 \\
       \hline
  \end{tabular}
    \caption{Results on the original HS29 problem.}
    \label{tab::HS29}
\end{table}

We conclude the section comparing the two approaches on the original formulation of the HS29 problem, whose ellipsoidal constraint respects the feasible set assumptions made in Section \ref{sec::feasible_set}. In this case, the projection onto the feasible convex set $C$ is not available in closed form: the use of a solver is required (we employed the \texttt{CVXOPT}\footnote{\url{https://cvxopt.org}} software package) and the projection operation effectively becomes a significant element of the computational cost of the algorithm. The results are reported in Table \ref{tab::HS29}. We get similar numbers as for HS29 with the unit sphere constraint: both the algorithms reached the same optimal value and \texttt{FSP} outperformed \texttt{PPM} on all the other metrics. In particular, we can observe that \texttt{FSP} performed about 1/2 of the projections of \texttt{PPM}, and such result is also reflected on the overall elapsed time ($T$) spent by the two algorithms.

\subsection{Cutest problems}

In this second experimental section, we \rev{first} compared \texttt{FSP} and \texttt{PPM} on the CUTEst problems reported in Table \ref{tab::problems} \rev{under both types of unit hyper-sphere constraints, i.e., $c = [0,\ldots,0]^\top$ and $c = [5,\ldots,5]^T$}. For a compact view of the results, we report in Figure \ref{fig::pp} the performance profiles achieved by the two approaches in terms of $n_f$ and $n_p$. We shall remark that both the algorithms managed to achieve the same optimal value on all the tested instances; we do not report the numbers here for the sake of brevity.

\begin{figure}
    \centering
    \subfloat{\includegraphics[width=0.49\textwidth]{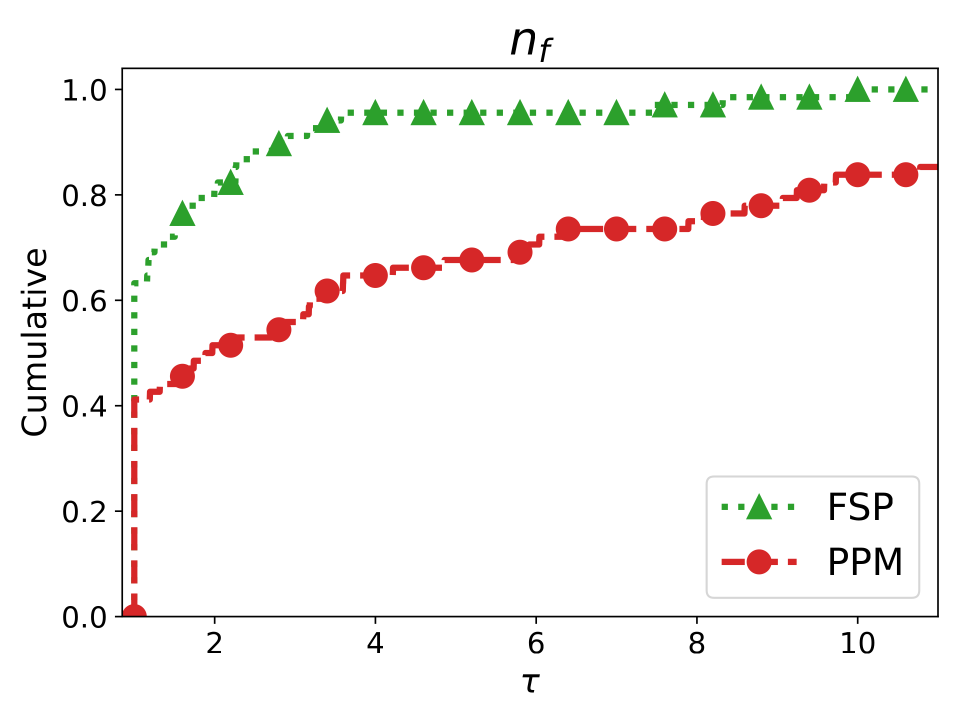}}
    \hfill
    \subfloat{\includegraphics[width=0.49\textwidth]{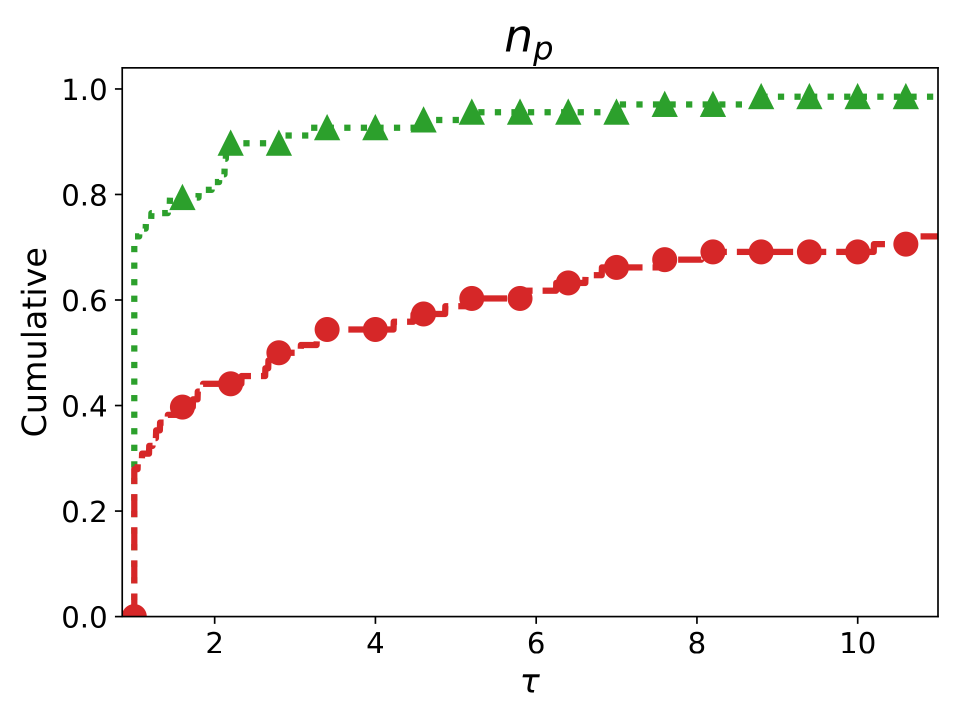}}
    \caption{Performance profiles in terms of $n_f$ and $n_p$ obtained by \texttt{FSP} and \texttt{PPM} on the Cutest problems listed in Table \ref{tab::problems} \rev{under both types of unit hyper-sphere constraints, i.e., $c = [0,\ldots,0]^T$ and $c = [5,\ldots,5]^T$}.}
    \label{fig::pp}
\end{figure}

Again, \texttt{FSP} proved to consume less function evaluations overall w.r.t.\ the competitor. Such performance gap is even more accentuated if we look at the number of  projections $n_p$.

\rev{Finally, we compared our method against the \texttt{SPG-DDS} variants, \texttt{DDS-proj} and \texttt{ORTHOMADS} on the benchmark set, using a budget of 25000 function evaluations. Given the structural differences between the solvers, we utilized data profiles for a concise comparison. Figure \ref{fig::data-SPG} shows these profiles for tolerances $\varepsilon \in \{10^{-2}, 10^{-4}, 10^{-6}, 10^{-8}\}$ against the \texttt{SPG-DDS} variants and \texttt{DDS-proj}, while Figure \ref{fig::data-ORTHO} shows the same profiles against \texttt{ORTHOMADS}.}

\rev{We first observe that \texttt{SPG-DDS-barrier} performed significantly worse than our method, \texttt{SPG-DDS-proj} and \texttt{DDS-proj}. This provides empirical evidence that projection of infeasible points before polling is crucial for obtaining an effective procedure. For accuracy levels $\varepsilon \in \{10^{-2}, 10^{-4}, 10^{-6}\}$, our method was competitive with \texttt{SPG-DDS-proj} and \texttt{DDS-proj}. This arguably indicates that both the use of a dense of search directions and the eventual use of simplex-gradient steps yields no clear advantage over the simpler strategy used within our method. %Moreover, in the considered setting, the \texttt{SPG} scheme did not lead to a substantial improvement in the performance of the \texttt{SPG-DDS-proj} algorithm compared to \texttt{FSP} and \texttt{DDS-proj}. 
When increasing the required accuracy to $\varepsilon = 10^{-8}$, our method even proved to be the most effective choice, outperforming all three competitors.}

\rev{In the comparison with \texttt{ORTHOMADS}, both methods exhibited similar performance at the lowest accuracy level ($\varepsilon = 10^{-2}$). However, as the accuracy requirement increased, our method achieved substantially better results. \texttt{ORTHOMADS} employs a dense of search directions as well, with an extreme-barrier strategy for handling constraints; this combination did not allow it to match the performance of our method, which relies only on coordinate directions.}

\begin{figure}[!h]
    \centering
    \subfloat{\includegraphics[width=0.49\linewidth]{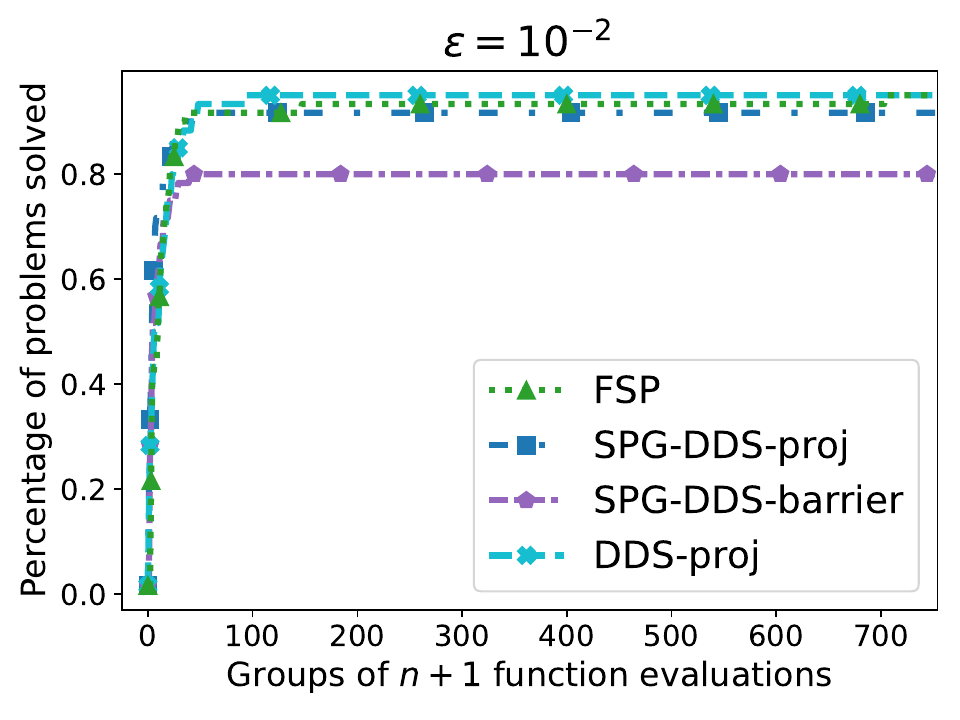}}
    \hfill
    \subfloat{\includegraphics[width=0.49\textwidth]{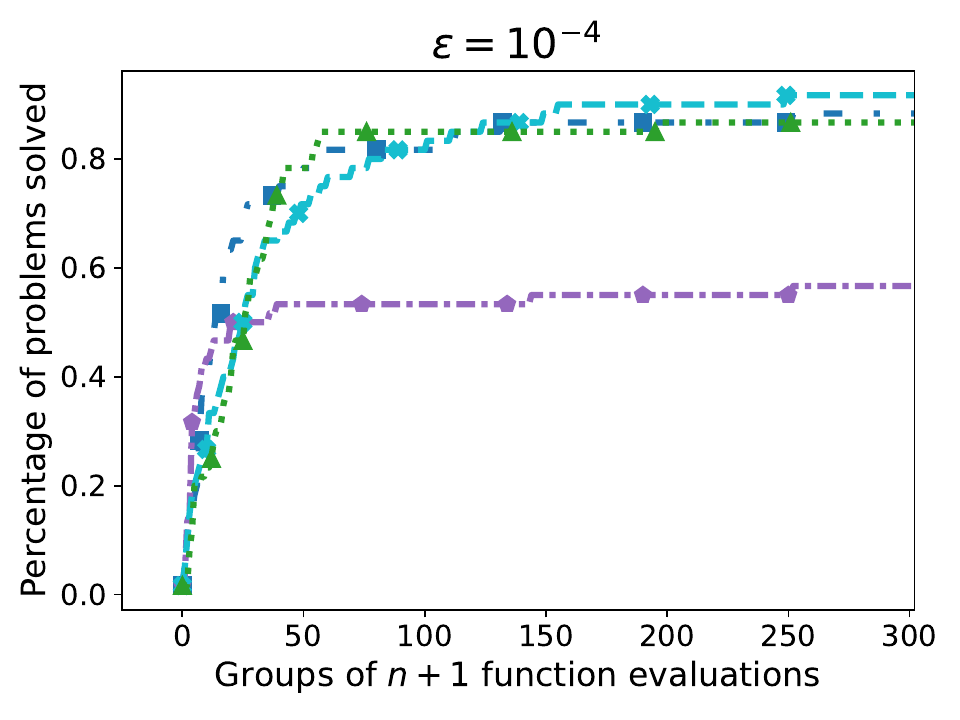}}
    \\
    \subfloat{\includegraphics[width=0.49\linewidth]{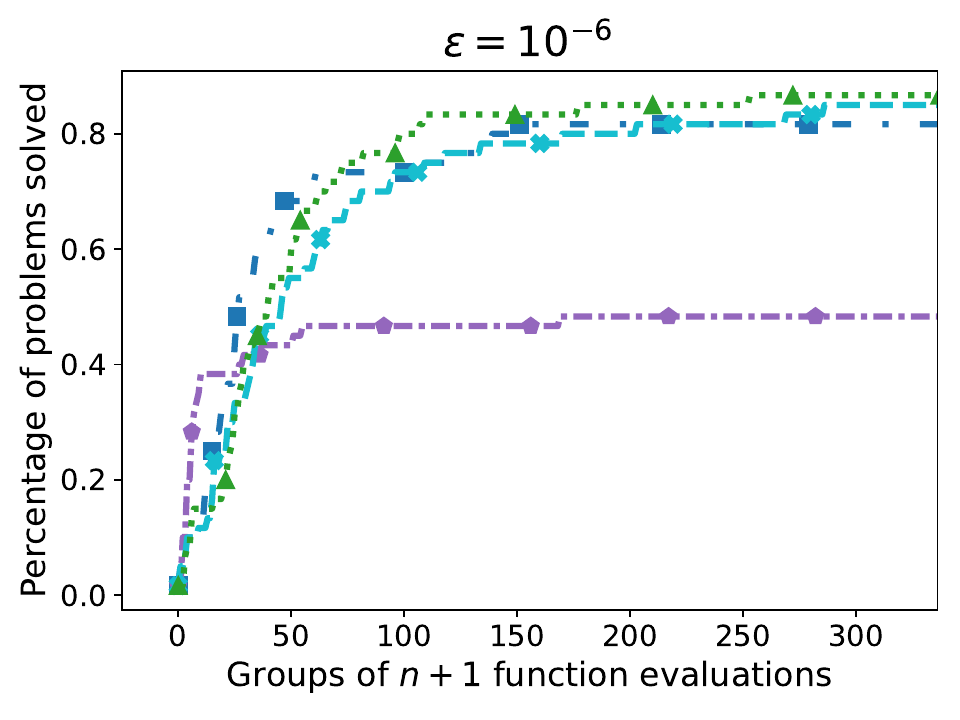}}
    \hfill
    \subfloat{\includegraphics[width=0.49\textwidth]{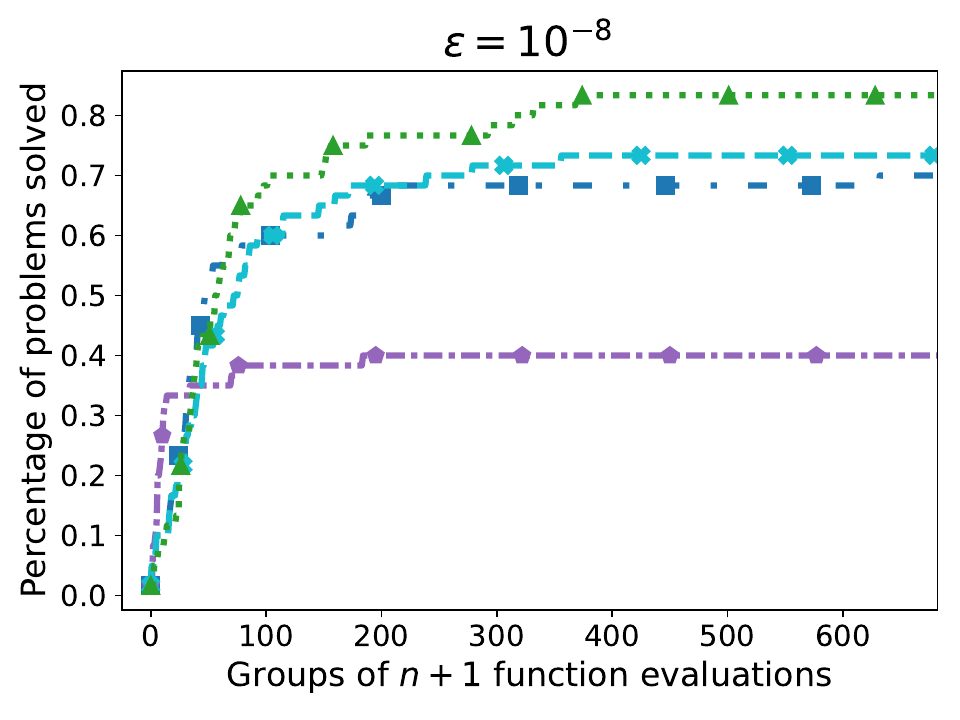}}
    \caption{\rev{Data profiles obtained by \texttt{FSP}, \texttt{SPG-DDS-proj}, \texttt{SPG-DDS-barrier} and \texttt{DDS-proj} for tolerances $\varepsilon \in \{10^{-2}, 10^{-4}, 10^{-6}, 10^{-8}\}$ on the Cutest problems listed in Table \ref{tab::problems}, under unit hyper-sphere constraints with both  $c = [0,\ldots,0]^T$ and $c = [5,\ldots,5]^T$.}}
    \label{fig::data-SPG}
\end{figure}

\begin{figure}[!h]
    \centering
    \subfloat{\includegraphics[width=0.49\linewidth]{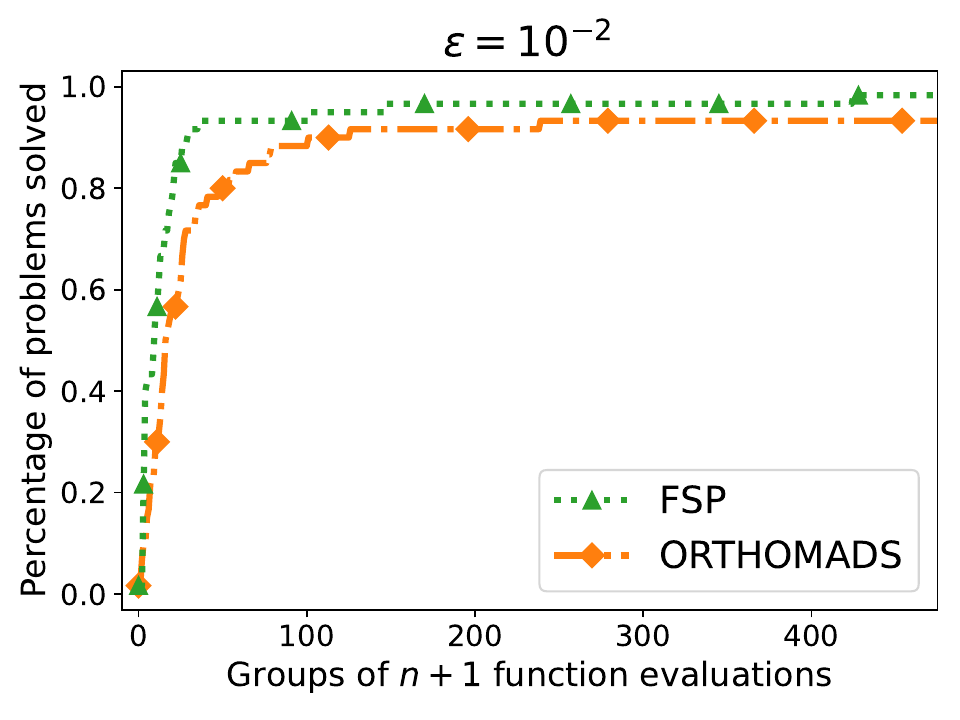}}
    \hfill
    \subfloat{\includegraphics[width=0.49\textwidth]{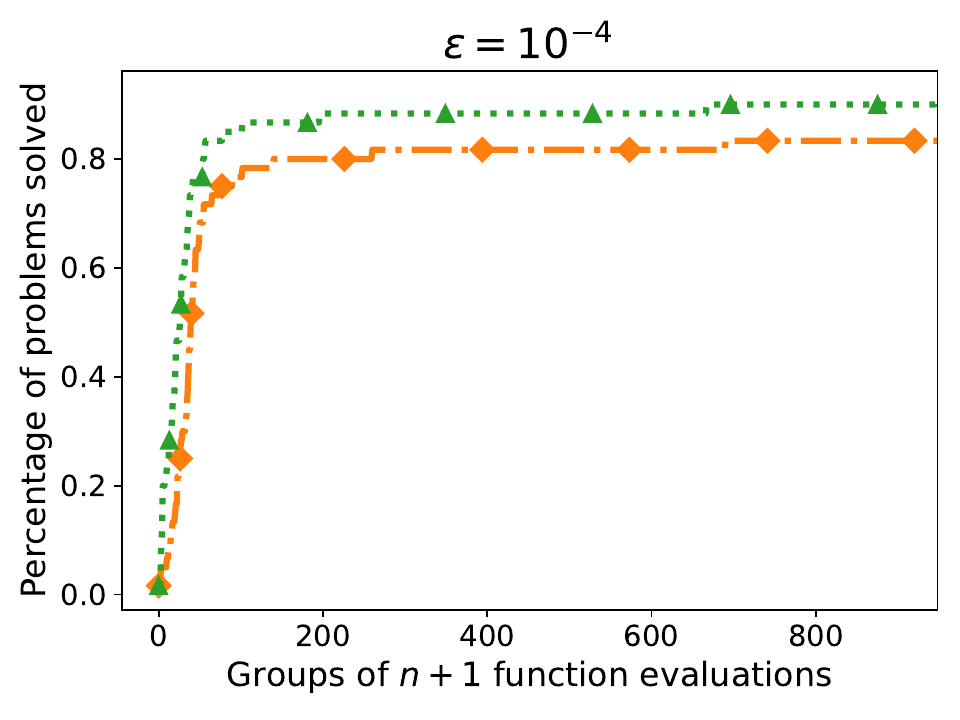}}
    \\
    \subfloat{\includegraphics[width=0.49\linewidth]{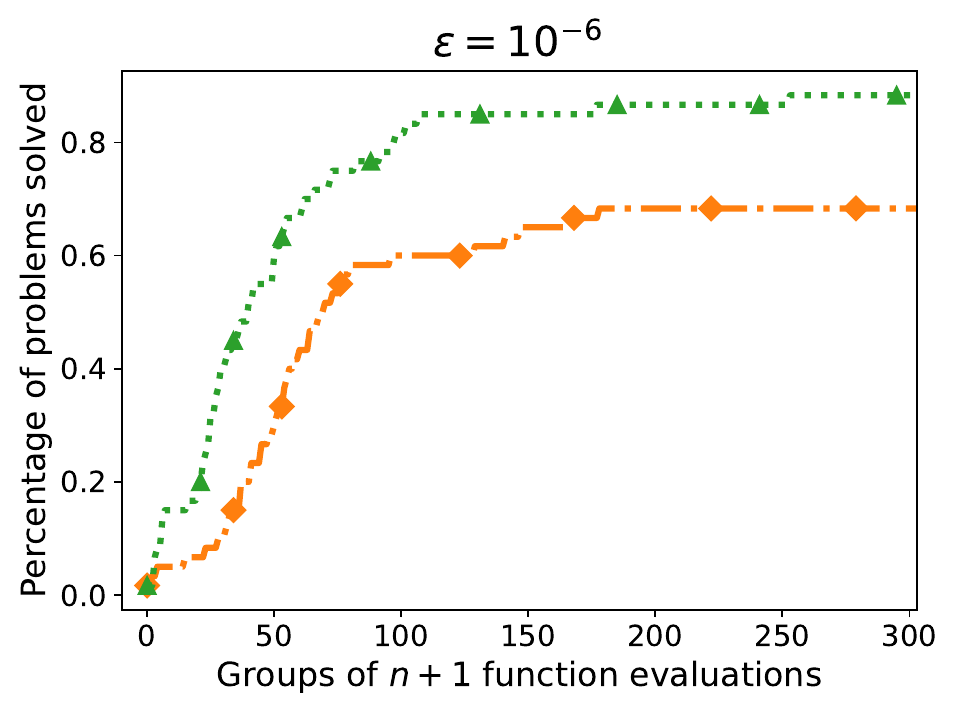}}
    \hfill
    \subfloat{\includegraphics[width=0.49\textwidth]{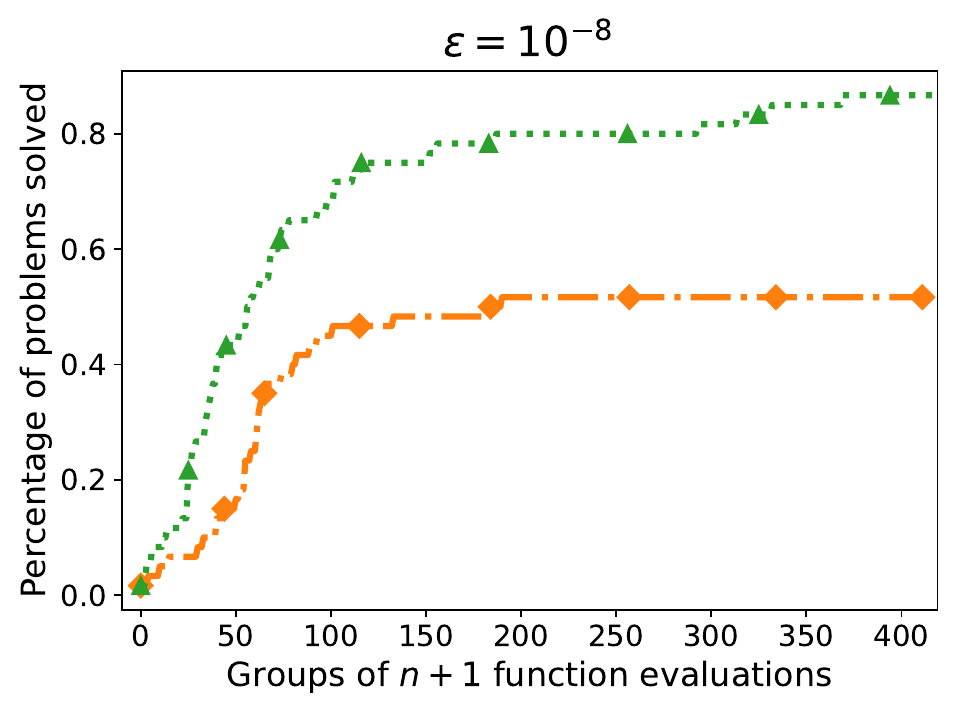}}
    \caption{\rev{Data profiles obtained by \texttt{FSP} and \texttt{ORTHOMADS} for tolerances $\varepsilon \in \{10^{-2}, 10^{-4}, 10^{-6}, 10^{-8}\}$ on the Cutest problems listed in Table \ref{tab::problems}, under unit hyper-sphere constraints with both  $c = [0,\ldots,0]^T$ and $c = [5,\ldots,5]^T$.}}
    \label{fig::data-ORTHO}
\end{figure}

\section{Conclusions}
\label{sec:concl}
In this paper, we focused on black-box optimization problems where the objective function is a smooth function with inaccessible derivatives, whereas the feasible set is a smooth, convex closed set. We introduced the concept of feasible search path, i.e., a curve contained in the feasible set, starting at a feasible solution, with suitable regularity properties. We discussed the properties required by sets of such curves not only to be sufficient for the characterization of stationarity, but also for guaranteeing the decrease of the objective function if a backtracking search is conducted from a nonstationary point. Then, we presented a pattern search algorithm that polls points along feasible search paths, showing that it is provably convergent to stationary points\rev{, without resorting to asymptotically dense sets of directions}. Of particular interest is the special case, fitting the general framework, of search paths defined by the projection of steps carried out along coordinate directions. The corresponding algorithm is shown to be computationally competitive for this class of problems with \rev{a selection of state-of-the-art methods from the literature. Worthy of special attention are the comparisons with other projection-based derivative-free approaches: one transforms the original problem into a nonsmooth unconstrained one, the other mixes (without proven convergence guarantees) our mechanism with an ORTHOMADS-type selection of polling directions and a simplex-gradient approach}.
\rev{\label{rev:fine_conc} Future work might focus on the extension of this contribution to more general convex sets of constraints.}

\section*{Declarations}

\subsection*{Funding}
No funding was received for conducting this study.

\subsection*{Disclosure statement}

The authors report there are no competing interests to declare.

\subsection*{Code availability statement}
The implementation code of the approach presented in the paper can be found at \url{https://github.com/pierlumanzu/FSP}.

\subsection*{Notes on contributors}
\textbf{Xiaoxi Jia} received her M.Sc. degree in mathematics (Optimization) from Nanjing Normal University in China in 2020. She then obtained her Ph.D. degree in Optimization in 2023 from the University of Wuerzburg in Germany. She has completed a one-year and one-month postdoctoral research at Saarland University in Germany. Her main research focuses on nonconvex and nonsmooth optimization problems.
\\\\
\textbf{Matteo Lapucci} received his bachelor and master's degree in Computer Engineering at the University of Florence in 2015 and 2018 respectively. He then received in 2022 his PhD degree in Smart Computing jointly from the Universities of Florence, Pisa and Siena. He is currently Assistant Professor at the Department of Information Engineering of the University of Florence. His main research interests include theory and algorithms for sparse, multi-objective and large scale nonlinear optimization.
\\\\
\textbf{Pierluigi Mansueto} received his bachelor and master's degree in Computer Engineering at the University of Florence in 2017 and 2020, respectively. He then obtained his PhD degree in Information Engineering from the University of Florence in 2024. Currently, he is a Postdoctoral Research Fellow at the Department of Information Engineering of the University of Florence. His main research interests are multi-objective optimization and global optimization.

\subsection*{ORCID}

Xiaoxi Jia: 0000-0002-7134-2169\\
Matteo Lapucci: 0000-0002-2488-5486\\
Pierluigi Mansueto: 0000-0002-1394-0937

\section*{Acknowledgements}
The authors are very grateful to Prof.\ M.\ Sciandrone and Dr.\ T.\ Trinci for the fruitful discussions\rev{, and to Prof. A.\ L.\ Custódio for sharing the \texttt{SPG\_DDS} code.  We would like to thank the two anonymous reviewers for their valuable comments, which helped us significantly improve the quality of the manuscript.}

\bibliographystyle{tfs}
\bibliography{bibliography}

@book{bertsekas1999nonlinear,
	title={Nonlinear programming},
	author={Bertsekas, Dimitri P},
	year={1999},
	publisher={Athena Scientific},
	edition={2},
	address={Belmont, MA}
}

@article{larson2019derivative,
  title={Derivative-free optimization methods},
  author={Larson, Jeffrey and Menickelly, Matt and Wild, Stefan M},
  journal={Acta Numerica},
  volume={28},
  pages={287--404},
  year={2019},
  publisher={Cambridge University Press}
}

@Article{Lucidi2002Derivative,
    author={Lucidi, Stefano and Sciandrone, Marco},
    title={A Derivative-Free Algorithm for Bound Constrained Optimization},
    journal={Computational Optimization and Applications},
    year={2002},
    month={Feb},
    day={01},
    volume={21},
    number={2},
    pages={119-142},
    issn={1573-2894},
    doi={10.1023/A:1013735414984}
}

@article{lucidi2002global,
  title={On the global convergence of derivative-free methods for unconstrained optimization},
  author={Lucidi, Stefano and Sciandrone, Marco},
  journal={SIAM Journal on Optimization},
  volume={13},
  number={1},
  pages={97--116},
  year={2002},
  publisher={SIAM}
}

@article{lucidi2002objective,
  title={Objective-derivative-free methods for constrained optimization},
  author={Lucidi, Stefano and Sciandrone, Marco and Tseng, Paul},
  journal={Mathematical Programming},
  volume={92},
  pages={37--59},
  year={2002},
  publisher={Springer}
}

@article{fasano2014linesearch,
  title={A linesearch-based derivative-free approach for nonsmooth constrained optimization},
  author={Fasano, Giovanni and Liuzzi, Giampaolo and Lucidi, Stefano and Rinaldi, Francesco},
  journal={SIAM Journal on Optimization},
  volume={24},
  number={3},
  pages={959--992},
  year={2014},
  publisher={SIAM}
}

@article{grippo1988global,
  title={Global convergence and stabilization of unconstrained minimization methods without derivatives},
  author={Grippo, Luigi and Lampariello, Francesco and Lucidi, Stefano},
  journal={Journal of Optimization Theory and Applications},
  volume={56},
  pages={385--406},
  year={1988},
  publisher={Springer}
}

@article{de1984stopping,
  title={Stopping criteria for linesearch methods without derivatives},
  author={De Leone, Renato and Gaudioso, Manlio and Grippo, Luigi},
  journal={Mathematical Programming},
  volume={30},
  pages={285--300},
  year={1984},
  publisher={Springer}
}

@article{liuzzi2010sequential,
  title={Sequential penalty derivative-free methods for nonlinear constrained optimization},
  author={Liuzzi, Giampaolo and Lucidi, Stefano and Sciandrone, Marco},
  journal={SIAM Journal on Optimization},
  volume={20},
  number={5},
  pages={2614--2635},
  year={2010},
  publisher={SIAM}
}

@article{di2015derivative,
  title={A derivative-free algorithm for constrained global optimization based on exact penalty functions},
  author={Di Pillo, Gianni and Lucidi, Stefano and Rinaldi, Francesco},
  journal={Journal of Optimization Theory and Applications},
  volume={164},
  pages={862--882},
  year={2015},
  publisher={Springer}
}

@article{galvan2021parameter,
  title={A parameter-free unconstrained reformulation for nonsmooth problems with convex constraints},
  author={Galvan, Giulio and Sciandrone, Marco and Lucidi, Stefano},
  journal={Computational Optimization and Applications},
  volume={80},
  pages={33--53},
  year={2021},
  publisher={Springer}
}

@article{hooke1961direct,
  title={``{D}irect {S}earch'' Solution of Numerical and Statistical Problems},
  author={Hooke, Robert and Jeeves, Terry A},
  journal={Journal of the ACM (JACM)},
  volume={8},
  number={2},
  pages={212--229},
  year={1961},
  publisher={ACM New York, NY, USA}
}

@article{lewis1999pattern,
  title={Pattern search algorithms for bound constrained minimization},
  author={Lewis, Robert Michael and Torczon, Virginia},
  journal={SIAM Journal on Optimization},
  volume={9},
  number={4},
  pages={1082--1099},
  year={1999},
  publisher={SIAM}
}

@article{lewis2000pattern,
  title={Pattern search methods for linearly constrained minimization},
  author={Lewis, Robert Michael and Torczon, Virginia},
  journal={SIAM Journal on Optimization},
  volume={10},
  number={3},
  pages={917--941},
  year={2000},
  publisher={SIAM}
}

@article{lewis2002globally,
  title={A globally convergent augmented {L}agrangian pattern search algorithm for optimization with general constraints and simple bounds},
  author={Lewis, Robert Michael and Torczon, Virginia},
  journal={SIAM Journal on Optimization},
  volume={12},
  number={4},
  pages={1075--1089},
  year={2002},
  publisher={SIAM}
}

@article{Shapiro2016,
  title={Differentiability properties of metric projections onto convex sets},
  author={Shapiro, Alexander},
  journal={Journal of Optimization Theory and Applications},
  volume={169},
  pages={953--964},
  year={2016},
  publisher={Springer}
}

@article{abramson2008pattern,
  title={Pattern search in the presence of degenerate linear constraints},
  author={Abramson, Mark A and Brezhneva, Olga A and Dennis Jr, John E and Pingel, Rachael L},
  journal={Optimisation Methods and Software},
  volume={23},
  number={3},
  pages={297--319},
  year={2008},
  publisher={Taylor \& Francis}
}

@article{hock1980test,
  title={Test examples for nonlinear programming codes},
  author={Hock, Willi and Schittkowski, Klaus},
  journal={Journal of Optimization Theory and Applications},
  volume={30},
  pages={127--129},
  year={1980},
  publisher={Springer}
}

@book{schittkowski2012more,
  title={More test examples for nonlinear programming codes},
  author={Schittkowski, Klaus},
  volume={282},
  year={2012},
  publisher={Springer Science \& Business Media}
}

@article{gould2015cutest,
  title={{CUTE}st: a constrained and unconstrained testing environment with safe threads for mathematical optimization},
  author={Gould, Nicholas IM and Orban, Dominique and Toint, Philippe L},
  journal={Computational Optimization and Applications},
  volume={60},
  pages={545--557},
  year={2015},
  publisher={Springer}
}

@article{dolan2002benchmarking,
  title={Benchmarking optimization software with performance profiles},
  author={Dolan, Elizabeth D and Mor{\'e}, Jorge J},
  journal={Mathematical Programming},
  volume={91},
  pages={201--213},
  year={2002},
  publisher={Springer}
}

@article{Custódio03052024,
author = {A. L. Custódio and E. H. M. Krulikovski and M. Raydan},
title = {A hybrid direct search and projected simplex gradient method for convex constrained minimization},
journal = {Optimization Methods and Software},
volume = {39},
number = {3},
pages = {534--568},
year = {2024},
publisher = {Taylor \& Francis},
doi = {10.1080/10556788.2023.2263618}}

@article{ortho09,
author = {Abramson, Mark A. and Audet, Charles and Dennis,, J. E. and Digabel, S\'{e}bastien Le},
title = {OrthoMADS: A Deterministic MADS Instance with Orthogonal Directions},
journal = {SIAM Journal on Optimization},
volume = {20},
number = {2},
pages = {948-966},
year = {2009},
doi = {10.1137/080716980}
}

@article{nomad22,
author = {Audet, Charles and Le Digabel, S\'{e}bastien and Montplaisir, Viviane Rochon and Tribes, Christophe},
title = {Algorithm 1027: NOMAD Version 4: Nonlinear Optimization with the MADS Algorithm},
year = {2022},
issue_date = {September 2022},
publisher = {Association for Computing Machinery},
address = {New York, NY, USA},
volume = {48},
number = {3},
issn = {0098-3500},
doi = {10.1145/3544489},
journal = {ACM Trans. Math. Softw.},
month = sep,
articleno = {35},
numpages = {22},
keywords = {mesh adaptive direct search, derivative-free optimization, blackbox optimization, Optimization software}
}

@article{dataprof09,
author = {Mor\'{e}, Jorge J. and Wild, Stefan M.},
title = {Benchmarking Derivative-Free Optimization Algorithms},
journal = {SIAM Journal on Optimization},
volume = {20},
number = {1},
pages = {172-191},
year = {2009},
doi = {10.1137/080724083}
}

@article{audet2002analysis,
  title={Analysis of generalized pattern searches},
  author={Audet, Charles and Dennis Jr, John E},
  journal={SIAM Journal on optimization},
  volume={13},
  number={3},
  pages={889--903},
  year={2002},
  publisher={SIAM}
}

@article{custodio2007using,
  title={Using sampling and simplex derivatives in pattern search methods},
  author={Cust{\'o}dio, Ana Lu{\'\i}sa and Vicente, Lu{\'\i}s Nunes},
  journal={SIAM Journal on Optimization},
  volume={18},
  number={2},
  pages={537--555},
  year={2007},
  publisher={SIAM}
}

\appendix

\section{Proof of Proposition \ref{prop:conv_fo}}
\label{sec:app}

\begin{proof}
    By the instructions \eqref{eq:gamma_gradient}-\eqref{eq:deriv_alg}, $\{x^k\}\subseteq C$. Since $C$ is compact, accumulation points exist. 
    
    Now, let us assume  that there exists a subsequence $K\subseteq\{0,1,\ldots\}$ such that 
    $$\lim_{k\in K,k\to\infty}x^k= \bar{x},$$
    with $\bar{x}$ nonstationary, i.e., letting $\bar{d}=P_{T_C(\bar{x})}(-\nabla f(\bar{x}))$, $\|\bar{d}\|\ge\epsilon > 0$ and thus $\nabla f(\bar{x})^T\bar{d}\le - \epsilon^2$.
    
    By the instructions of the algorithm we also know that the entire sequence $\{f(x^k)\}$ is nonincreasing: it thus admits limit $f^*$, which is finite since $f$ is bounded below. Moreover, by the sufficient decrease condition we have
    $$f(x^{k+1})-f(x^k)=f(\gamma_k(\alpha_k))-f(\gamma_k(0))\le -\sigma\alpha_k^2\le 0.$$
    Taking the limits for $k\in K$, $k\to\infty$, we immediately get that $$\lim_{k\in K,k\to\infty}\alpha_k=0.$$ 
    Hence, for any $q\in\mathbb{N}$, we have for $k\in K$ sufficiently large that the step $\delta^q\Delta_0$ does not satisfy the sufficient decrease condition, i.e.,
    $$f\left(\gamma_k\left(\delta^q\Delta_0\right)\right)- f(\gamma_k(0))> -\sigma(\delta^q\Delta_0)^2.$$
    Dividing both sides of the above inequality by $\delta^q\Delta_0$ and taking the limit for $k\in K$, $k\to\infty$, we then get
    $$\lim_{k\in K,k\to \infty}\frac{f\left(\gamma_k\left(\delta^q\Delta_0\right)\right)- f(\gamma_k(0))}{\delta^q\Delta_0}\ge  -\sigma\delta^q\Delta_0.$$
    We also know that $\gamma'_k(0)=P_{T_C(x^k)}(-\nabla f(x^k))$ for all $k$ and that $\nabla f(x^k)\to\nabla f(\bar{x})$ as $x^k$ goes to $\bar{x}$. Recalling Assumption \ref{ass:cont_fp} we can then write $ $
    $$\frac{f\left(\bar \gamma\left(\delta^q\Delta_0\right)\right)- f(\bar \gamma(0))}{\delta^q\Delta_0}\ge  -\sigma \delta^q\Delta_0.$$
    Since $q$ is arbitrary in $\mathbb{N}$, we can take the limit for $q\to\infty$, i.e., for $\delta^q\Delta_0\to 0$, obtaining
    $$\mathcal{D}_{f\circ\bar{\gamma}}(0)=\nabla f(\bar{\gamma}(0))^T\bar{\gamma}'(0)\ge  0.$$
    Recalling again Assumption \ref{ass:cont_fp}, we have $\bar{\gamma}'(0)=P_{T_C(\bar{x})}(-\nabla f(\bar{x}))=\bar{d}$ and so we finally get
    $$\nabla f(\bar{x})^T\bar{d}\ge 0,$$
    which is absurd as we had assumed $\nabla f(\bar{x})^T\bar{d}\le -\epsilon^2<0$.

\end{proof}

\end{document}